\theoremstyle{definition}
\newtheorem{theorem}{Theorem}
\newtheorem{lemma}[theorem]{Lemma}
\newtheorem{proposition}[theorem]{Proposition}
\newtheorem{definition}[theorem]{Definition}
\newtheorem{conjecture}[theorem]{Conjecture}
\theoremstyle{remark}
\newtheorem{remark}[theorem]{Remark}
\newcommand{\BN}{\mathbb{N}}
\newcommand{\Mod}[1]{\ (\mathrm{mod}\ #1)}
\DeclareMathOperator{\AP}{AP}
\title{Functions on Antipower Prefix Lengths of the Thue-Morse Word}
\author{Shyam Narayanan\\
\small Department of Mathematics\\[-0.8ex]
\small Harvard University\\[-0.8ex] 
\small Cambridge, MA, U.S.A.\\
\small\tt shyam.s.narayanan@gmail.com
}
\date{}
\begin{document}

\maketitle

\begin{abstract}
	We say that a word $w$ of length $kn$ is a $k$-\textit{antipower} if it can be written in the form $w_1 \cdots w_k$, where each $w_i$ is a distinct word of length $n$. We analyze prefixes of the Thue-Morse word $\textbf{t}$ and lengths of antipowers occurring in them. Define $\Gamma(k)$ to be the largest odd $n$ such that the prefix of $\textbf{t}$ of length $kn$ is not a $k$-antipower, and $\gamma(k)$ to be the smallest odd $n$ such that the corresponding prefix is a $k$-antipower. We provide strong bounds on the asymptotic values of $\gamma(k)$ and $\Gamma(k)-\gamma(k)$. Our bounds on $\gamma(k)$ affirmatively answer one conjecture of Defant and make substantial progress towards answering a second conjecture of Defant. It was previously known that $\Gamma(k)$ and $\gamma(k)$ grow linearly in $k$, but our bounds on $\Gamma(k)-\gamma(k)$ prove that $\Gamma(k)-\gamma(k)$ also grows linearly in $k$.
\end{abstract}

\section{Introduction}

A finite word $W$, i.e., a finite string of letters from a fixed alphabet, is called a \emph{$k$-power} if $W = w^k = ww\cdots w$ (concatenated $k$ times), where $w$ is another word. Thue \cite{Thue} created an infinite binary word (i.e., a word with only two distinct letters) such that no finite contiguous subword of the word is a $3$-power. This word, now famously known as the Thue-Morse word, can be defined as follows:

\begin{definition}
    For each $\ell\in\{0,1\}$, let $\overline{\ell}=1-\ell$. If $W=\ell_1\cdots\ell_k$ is a finite word of length $k$ over the alphabet $\{0,1\}$, let $\overline{W}=\overline{\ell_1}\cdots\overline{\ell_k}$. Consider the sequence $A_n$ of finite words over the alphabet $\{0, 1\}$ such that $A_0 = 0$ and $A_n = A_{n-1}\overline{A_{n-1}}$ for all $n\geq 1$. Define the \emph{Thue-Morse word} $\textbf{t}$ to be $$\textbf{t} = \lim\limits_{n \to \infty} A_n = 0110100110010110\cdots.$$ Let $\textbf{t} = t_0t_1t_2\dots$, where $t_n$ is the $(n+1)^\text{st}$ letter in $\textbf{t}$.
\end{definition}

The Thue-Morse word is known to be very useful in a variety of fields, such as combinatorics, economics \cite{PALACIOS-HUERTA}, game theory \cite{Cooper}, and analytic number theory \cite{AlloucheCohen}. Various sequences and constants relating to the Thue-Morse sequence have also been studied in detail \cite{Dekking,Mahler}.

The Thue-Morse word is known to be \textit{overlap-free}, meaning that for all finite words $x, y$ such that $x$ is nonempty, the word $xyxyx$ is never a contiguous subword of $\textbf{t}$ \cite{Thue}. This condition implies that $\textbf{t}$ does not contain any $3$-powers as contiguous subwords, since letting $y$ be the empty word tells us that $xxx$ can never be a contiguous subword of $\textbf{t}.$ Because the Thue-Morse word contains no $3$-powers as contiguous subwords, looking at $k$-powers in the Thue-Morse word is not as interesting as looking at what are called $k$-antipowers, first introduced by Fici, Restivo, Silva, and Zamboni \cite{OrigPaper}.

\begin{definition}
    A word $W$ of length $kn$ is called a \emph{$k$-antipower} if $W = w_1\cdots w_k$, where $|w_i| = n$ for all $1 \le i \le k$ and $w_i \neq w_j$ for all $1 \le i \neq j \le k$.
\end{definition}

Suppose $x$ is a word and $k$ is a positive integer. We consider the set $\AP(\emph{x}, k) \subset \mathbb{N}$, defined to be the set of positive integers $n$ such that the prefix of length $kn$ in $x$ (i.e., the word formed from the first $kn$ letters of $\emph{x}$) is a $k$-antipower. Note that replacing each occurrence of $0$ with $01$ and each occurrence of $1$ with $10$ in $\textbf{t}$ again gives us $\textbf{t},$ so for all $i, j,$ we have $t_{(i-1)n} \cdots t_{in-1} = t_{(j-1)n} \cdots t_{jn-1}$ if and only if $t_{2(i-1)n} \cdots t_{2in-1} = t_{2(j-1)n} \cdots t_{2jn-1}$. This means that $n \in \AP(\textbf{t}, k)$ if and only if $2n \in \AP(\textbf{t}, k)$. Consequently, we are mainly interested in $\AP(\textbf{t}, k) \cap (2\mathbb{N}-1)$, the set of odd $n\in \AP({\bf t},k)$.

\begin{definition}
    For $k \ge 1,$ let $\mathcal{F}(k) = \AP(\textbf{t}, k) \cap (2\mathbb{N}-1)$ be the set of odd $n$ such that the prefix of $\textbf{t}$ of length $kn$ is a $k$-antipower.
\end{definition}

It turns out that $\AP(\textbf{t}, k)$, and thus $\mathcal{F}(k)$, is nonempty for all $k$ \cite{OrigPaper}. In fact, we have $(2\mathbb{N}-1) \backslash \mathcal{F}(k)$ is finite for all $k$ \cite{Defant}. As a result, we can define the following:

\begin{definition}
    Define $\gamma(k)$ to be the minimum element in $\mathcal{F}(k)$ and $\Gamma(k)$ to be the maximum element in $(2\mathbb{N}-1) \backslash \mathcal{F}(k)$. To avoid issues of the maximum of the empty set, we define $\Gamma(k) = 0$ if $\AP(\textbf{t}, k) = 2\BN - 1.$
\end{definition}

\begin{remark} \label{3inF}
    For $k \ge 3,$ note that $\Gamma(k) \neq 0.$ This is true since $\textbf{t} = 011010011 \cdots $ and since $t_0t_1t_2 = t_6t_7t_8,$ the prefix of length $3k$ is not a $k$-antipower for all $k \ge 3.$ Therefore, we have $3 \not\in \AP(\textbf{t}, k)$ for all $k \ge 3$, so $\Gamma(k) \ge 3$ for all $k \ge 3$.
\end{remark}

To help us understand $\Gamma$ and $\gamma$, we will analyze the following natural function:

\begin{definition}
    For an odd positive integer $n \ge 3$, define $\mathfrak{K}(n)$ to be the smallest $k$ such that the prefix of $\textbf{t}$ of length $kn$ is not a $k$-antipower. Our definition does not make sense for $n = 1$ since every prefix of $\textbf{t}$ is a $1$-antipower, so define $\mathfrak{K}(1) = 0.$
\end{definition}

Note that for $k \ge 3,$ we have that $\Gamma(k)$ is the largest odd $n$ such that the prefix of length $kn$ is not a $k$-antipower, which means $\Gamma(k)$ is the largest odd $n$ such that $\mathfrak{K}(n) \le k.$ Likewise, for $k \ge 3,$ we have that $\gamma(k)$ is the smallest odd $n$ such that the prefix of length $kn$ is a $k$-antipower. Since $\gamma(k)$ does not equal $1$ for $k \ge 3$ by Remark \ref{3inF}, this means $\gamma(k)$ is the smallest odd $n$ such that $\mathfrak{K}(n) > k.$

\begin{definition}
    For positive integers $c$ and $n$, define the $c^\text{th}$ \textit{block} of size $n$ in $\textbf{t}$ to be the subword $t_{(c-1)n}t_{(c-1)n+1}\cdots t_{cn-1}.$
\end{definition}

\begin{remark}
    Note that if $\mathfrak{K}(n) = k,$ then the first $k-1$ blocks of size $n$ are distinct, but the $k^\text{th}$ block of size $n$ equals one of the first $k-1$ blocks of size $n$.
\end{remark}

A basic application of the pigeonhole principle shows that $\mathfrak{K}(n) \le 2^n+1$. However, this bound is extremely poor. In fact, it is known that $\mathfrak{K}(n)$ grows linearly in $n$ for odd $n$, meaning $\limsup_{n \to \infty} \frac{\mathfrak{K}(n)}{n} < \infty$ and $\liminf_{n \to \infty} \frac{\mathfrak{K}(n)}{n} > 0.$ It is also known that $\gamma(k)$ and $\Gamma(k)$ grow linearly in $k$, and Defant \cite{Defant} proved the following, up to a small error in parts (a) and (b), which we explain and fix in \ref{FixColin}:

\begin{theorem} \cite{Defant} \label{DefantTheorem} \hspace{.5cm}
\begin{enumerate}[(a)]
    \item $\frac{1}{2} \le \liminf\limits_{k \to \infty} \frac{\gamma(k)}{k} \le \frac{9}{10}$,
    \item $1 \le \limsup\limits_{k \to \infty} \frac{\gamma(k)}{k} \le \frac{3}{2}$,
    \item $\liminf\limits_{k \to \infty} \frac{\Gamma(k)}{k} = \frac{3}{2}$,
    \item $\limsup\limits_{k \to \infty} \frac{\Gamma(k)}{k} = 3$.
\end{enumerate}
\end{theorem}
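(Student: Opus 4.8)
The plan is to reduce all four statements to a single auxiliary function and then to study that function carefully. Observe first that the prefix of $\textbf{t}$ of length $kn$ is a $k$-antipower exactly when the $k$ aligned blocks $B_i := t_{in}t_{in+1}\cdots t_{in+n-1}$, for $0 \le i < k$, are pairwise distinct. Hence $\mathcal{F}(k) = \{\, n \in 2\BN - 1 : \mathfrak{K}(n) > k \,\}$, so that $\gamma(k) = \min\{\, n \in 2\BN-1 : \mathfrak{K}(n) > k\,\}$ and $\Gamma(k) = \max\{\, n \in 2\BN -1 : \mathfrak{K}(n) \le k\,\}$. In this language $\Gamma$ is the upper and $\gamma$ the lower inverse envelope of $\mathfrak{K}$: the $k$ for which $\Gamma(k)/k$ is large are precisely those for which some large odd $n$ has an unusually small $\mathfrak{K}(n)$, and symmetrically for $\gamma$. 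I would therefore spend the bulk of the effort proving sharp two-sided bounds on $\mathfrak{K}(n)/n$, together with information on how the extreme values are distributed among the odd $n$, and only at the end translate these into statements about $\gamma$ and $\Gamma$.

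The engine is a usable criterion for when two aligned blocks coincide. Writing $t_x$ as the parity of the binary digit sum $s_2(x)$ and using $s_2(a+b) = s_2(a)+s_2(b)-c(a,b)$, where $c(a,b)$ counts the carries in the binary addition of $a$ and $b$, one obtains $t_x = t_{x+D}$ if and only if $c(x,D) \equiv s_2(D) \pmod 2$. Taking $D = (j-i)n$, the equality $B_i = B_j$ becomes the assertion that the carry parity $c(in+m,(j-i)n)$ is constant in $m$ over the whole window $0 \le m < n$. Equivalently, and this is the form I would actually iterate, the folding relations $t_{2x}=t_x$ and $t_{2x+1}=\overline{t_x}$ reduce the comparison of two equal-parity blocks of odd length $n$ to the comparison of two length-$\lceil n/2\rceil$ factors of $\textbf{t}$, while the opposite-parity case is governed by the repetition structure of $\textbf{t}$. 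I expect this descent, combined with the classical facts that $\textbf{t}$ is overlap-free and that a square $ww$ occurs in $\textbf{t}$ only when $|w|$ has the form $2^a$ or $3\cdot 2^a$, to yield both a universal lower bound of the shape $\mathfrak{K}(n) \ge n/3 - O(1)$ and a universal upper bound on how large $\mathfrak{K}(n)$ can be.

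With the criterion in hand I would prove the four bounds in two halves. For the ``for all large $k$'' directions, namely the lower estimates on $\liminf \gamma(k)/k$ and $\liminf \Gamma(k)/k$ and the upper estimates on the two limsups, it suffices to feed the uniform inequalities on $\mathfrak{K}(n)$ through the inverse-envelope dictionary; for instance $\mathfrak{K}(n) \ge n/3 - O(1)$ for every odd $n$ forces $\Gamma(k) \le 3k + O(1)$ and hence $\limsup \Gamma(k)/k \le 3$. For the complementary directions, the upper estimates on the liminfs and the lower estimates on (equivalently, the exact values of) the limsups, I would construct explicit infinite families of odd $n$ with prescribed binary expansions and evaluate $\mathfrak{K}(n)$ on them essentially exactly. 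Concretely, to force $\mathfrak{K}(n) \approx n/3$, which realizes $\limsup \Gamma(k)/k = 3$, I would seek $n$ whose binary expansion makes $B_0$ recur at a block index near $n/3$; and to realize $\limsup \gamma(k)/k = 3/2$ and the value $\liminf \Gamma(k)/k = 3/2$ I would exhibit stretches of consecutive odd $n$ on which $\mathfrak{K}(n)/n$ stays pinned near $2/3$ before jumping, so that the relevant inverse envelope touches the line of slope $3/2$.

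The main obstacle is twofold. First, the block-equality criterion must be made uniform: ruling out an early repeat requires controlling the carry parity $c(in+m,(j-i)n)$ simultaneously in $m$, $i$, and $j$, and the opposite-parity case of the folding does not reduce cleanly and must be routed through the square and overlap structure of $\textbf{t}$ rather than by direct recursion. Second, pinning the constants to exact values rather than to intervals, so that $\liminf \Gamma(k)/k$ is exactly $3/2$ and the two limsups are exactly $3$ and $3/2$, demands that the explicit constructions match the universal bounds to first order; this is where the delicate arithmetic of carries and of the binary expansions of $n$ and of the multipliers $j-i$ will have to be carried out in full, and I expect it to be the most technical part of the argument.
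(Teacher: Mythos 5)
First, a point of reference: the paper does not prove this theorem at all --- it is quoted from \cite{Defant} as background, and only the supporting machinery (Propositions \ref{DivideBy2^i} and \ref{ConvOfPrev}) is reproduced here. Measured against that machinery, your framework is the right one: the identification of $\gamma$ and $\Gamma$ as the lower and upper inverse envelopes of $\mathfrak{K}$ is correct (with the threshold $\mathfrak{K}(n)>k$, as you have it), your digit-sum/carry criterion and the folding relations $t_{2x}=t_x$, $t_{2x+1}=\overline{t_x}$ are exactly the content of Proposition \ref{DivideBy2^i}, and the overlap-freeness argument is exactly Proposition \ref{ConvOfPrev}, which yields the uniform floor $\mathfrak{K}(n)\ge 1+2^{1+\lfloor\log_2(n/3)\rfloor}>n/3$ and hence $\limsup\limits_{k\to\infty}\Gamma(k)/k\le 3$ as you say.

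There are, however, two genuine gaps. The first is a sorting error in your second paragraph: of the four ``for all large $k$'' directions, only $\limsup\Gamma(k)/k\le 3$ and $\liminf\gamma(k)/k\ge\frac12$ follow from uniform bounds on $\mathfrak{K}(n)/n$. The bound $\liminf\Gamma(k)/k\ge\frac32$ would require the uniform upper bound $\mathfrak{K}(n)\le\frac23 n(1+o(1))$, which is false: $\mathfrak{K}(2^i+3)=2^i+5\sim n$ (Lemma \ref{2^i+1 and 2^i+3}), and Defant's Lemma 19 gives $\mathfrak{K}(3\cdot 2^{i-1}+1)\gtrsim\frac{10}{9}n$. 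Likewise $\limsup\gamma(k)/k\le\frac32$ does not follow from the floor $\mathfrak{K}(n)>n/3$: feeding that floor through your dictionary only gives $\gamma(k)\lesssim 3\cdot 2^{\lceil\log_2 k\rceil-1}$, whose ratio to $k$ oscillates between $\frac32$ and $3$, so you only recover $\limsup\gamma(k)/k\le 3$. Both of these bounds therefore require pointwise control of $\mathfrak{K}$ at well-chosen $n\approx\frac32 k$ depending on $k$ --- lower bounds strictly exceeding the universal floor in one case, and witnesses of near-minimal $\mathfrak{K}(n)\approx\frac23 n$ in the other. The second gap is that all of the quantitative content of the theorem lives in the explicit families of $n$ with prescribed binary expansions realizing the extremal ratios (and in the uniform upper bound on $\mathfrak{K}$, which is itself a construction: one must exhibit a repeated block among the first $2n(1+o(1))$ blocks for \emph{every} odd $n$); none of these is carried out, and you acknowledge as much. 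As written this is a correct roadmap with one mislabeled turn, not a proof of any of the eight inequalities.
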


The growth of $\gamma(k)$ is not as well understood as that of $\Gamma(k).$ Defant made the following conjecture about the growth of $\gamma(k)$:

\begin{conjecture} \label{DefantConjecture} \cite{Defant}\hspace{.5cm}
\begin{enumerate}[(a)]
    \item $\liminf\limits_{k \to \infty} \frac{\gamma(k)}{k} = \frac{9}{10}$,
    \item $\limsup\limits_{k \to \infty} \frac{\gamma(k)}{k} = \frac{3}{2}$.
\end{enumerate}
\end{conjecture}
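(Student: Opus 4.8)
The plan is to transfer the whole problem to the ``first-collision index'' $\mathfrak{K}$, which behaves monotonically where $\gamma$ and $\Gamma$ do not. For a fixed odd $n$, deleting the last block cannot undo an existing coincidence, so the first $k$ blocks of length $n$ are pairwise distinct exactly when $k<\mathfrak{K}(n)$. Hence $\mathcal{F}(k)=\{\,n\text{ odd}:\mathfrak{K}(n)>k\,\}$ and $\gamma(k)=\min\{\,n\text{ odd}:\mathfrak{K}(n)>k\,\}$. A short inversion argument then gives the clean identity
\[
\liminf_{k\to\infty}\frac{\gamma(k)}{k}=\frac{1}{\limsup_{n\to\infty}\big(\mathfrak{K}(n)/n\big)},
\]
both directions being genuine: the bound $\liminf_k\gamma(k)/k\ge 1/\beta$ (with $\beta=\limsup_n\mathfrak{K}(n)/n$) is precisely the universal estimate $\mathfrak{K}(n)\le(\beta+o(1))n$, and the reverse comes from setting $k=\mathfrak{K}(n_m)-1$ along a sequence with $\mathfrak{K}(n_m)/n_m\to\beta$. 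Thus the first bullet of the conjecture is equivalent to $\limsup_n\mathfrak{K}(n)/n=10/9$. By the same reasoning one gets $\limsup_k\gamma(k)/k\le 1/\liminf_n(\mathfrak{K}(n)/n)$ for free, so the universal lower bound $\mathfrak{K}(n)\ge(2/3-o(1))n$ (available from Defant's $\limsup_k\gamma(k)/k\le 3/2$) already yields ``$\le 3/2$'' for the second bullet; the matching ``$\ge 3/2$'' is not automatic from the $\mathfrak K$-ratios and will need its own construction.

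The engine for controlling $\mathfrak{K}(n)$ is a halving reduction. Using $t_{2m}=t_m$ and $t_{2m+1}=\overline{t_m}$, a block of odd length $n$ starting at position $s$ is completely determined by the length-$L$ factor of $\textbf{t}$ beginning at $\lfloor s/2\rfloor$, where $L=(n+1)/2$, with the parity of $s$ deciding whether the letters of that factor occupy the even or the odd positions within the block (the remaining positions being their complements). Comparing the two embeddings shows that a coincidence between two blocks of opposite start-parity would force a length-$(L-1)$ factor of $\textbf{t}$ to be constant, which is impossible for $L\ge 3$; hence for $n\ge 5$ all collisions occur between blocks whose starts share the same parity, and such a collision is equivalent to equality of the two corresponding length-$L$ factors. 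Since block $2a+1$ corresponds to the factor at position $an$ and block $2a$ to the factor at $an-L$, the question becomes: how soon does a length-$L$ factor repeat along an arithmetic progression of positions with common difference $n=2L-1$? Then $\mathfrak{K}(n)$ is, up to an additive constant, twice the first-repeat index along the faster of the two interleaved progressions.

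For the ``achievability'' directions I would build explicit families through this factor picture. To get $\limsup_n\mathfrak{K}(n)/n\ge 10/9$ (hence $\liminf_k\gamma(k)/k\le 9/10$, matching Defant), I would choose $n$ whose binary expansion makes the windows $[an,an+L)$ sample long, pairwise-distinct factors, keeping the first repeat as late as $\approx(10/9)L$ terms; the factor comparisons are decided by the Thue–Morse recursion applied to the digits of $an$ and $an-L$. For ``$\limsup_k\gamma(k)/k\ge 3/2$'' I would instead produce a sparse sequence $k_m$ for which \emph{every} odd $n<(3/2-\varepsilon)k_m$ already forces a repeated length-$L$ factor within the first $k_m$ blocks (so $\mathfrak{K}(n)\le k_m$ for all such $n$), while some $n\approx\tfrac32 k_m$ works; this is exactly the delicate step that the naive inversion cannot supply, because one must rule out \emph{small} $n$ with anomalously large $\mathfrak{K}(n)$, i.e.\ locate the gaps between the positions of the $\mathfrak{K}$-spikes rather than just their heights.

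The main obstacle is the universal upper bound $\mathfrak{K}(n)\le(10/9+o(1))n$, equivalently $\liminf_k\gamma(k)/k\ge 9/10$, the one piece of the conjecture I do not expect to settle cleanly. By the reduction it asks that along a progression of length-$L$ factors with step $n\approx 2L$ a repeat is \emph{forced} within about $(10/9)L$ terms. Pigeonhole alone is far too weak: the subword complexity of $\textbf{t}$ has $\limsup_L p(L)/L=10/3$, so a collision is only guaranteed after $\sim(10/3)L$ terms, which would give the much larger ratio $\tfrac{10}{3}$ and the useless bound. One must show that the specific progression visits only about $(10/9)L$ distinct factors and then exploit the self-similar structure to close the count; it is exactly the sharpening of this constant from the easily attainable $4/3$ down to $10/9$ that is hard, and this is why I anticipate reaching only $3/4\le\liminf_k\gamma(k)/k\le 9/10$ rather than the full equality, leaving the lower endpoint of the conjecture open.
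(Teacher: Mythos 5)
Your reduction to $\mathfrak{K}$ is sound and is exactly the paper's framework: since deleting the last block preserves distinctness, $n\in\mathcal F(k)$ iff $\mathfrak{K}(n)>k$, and the two inversion identities you state (giving $\liminf_k\gamma(k)/k=1/\limsup_n\mathfrak{K}(n)/n$ and $\limsup_k\gamma(k)/k\le 1/\liminf_n\mathfrak{K}(n)/n$) are correct and are how the paper derives $\liminf_k\gamma(k)/k\ge 3/4$ from $\limsup_n\mathfrak{K}(n)/n\le 4/3$. Your single-halving observation (opposite-parity starts would force a constant factor of length about $(n-1)/2$) is also correct, and its iterated form is the paper's Propositions 6--7, which compare the $c$th and $(c+2^i)$th blocks by passing to $\lfloor\cdot/2^i\rfloor$. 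You also correctly diagnose that the first bullet reduces to $\limsup_n\mathfrak{K}(n)/n=10/9$, that pigeonhole via subword complexity is hopeless, and that only $3/4\le\liminf_k\gamma(k)/k\le 9/10$ is within reach --- which is indeed all the paper achieves; the first bullet remains open there too.

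The genuine gap is that the one nontrivial thing you do claim to prove --- $\limsup_k\gamma(k)/k\ge 3/2$ --- is left entirely as a plan. You correctly identify that it requires a \emph{uniform} bound $\mathfrak{K}(n)\le 2^i(1+o(1))$ for every odd $n$ in the whole window $(2^i,\,3\cdot 2^{i-1}-13)$, not just for a favorable subsequence, since one anomalously large $\mathfrak{K}(n)$ at a small $n$ would cap $\gamma(k)$ below $\tfrac32 k$. But you give no mechanism for ruling out such spikes; ``choose $n$ whose binary expansion makes the windows sample pairwise-distinct factors'' addresses the (easy) achievability direction, not the universal one. In the paper this uniform bound is the entire content of Section 3: one splits $n=a\cdot 2^j+d$ according to the largest power of $2$ with $n\mp d\equiv 0$, handles $d=\pm1$ (Lemma 8), $d\in\{\pm3,\pm5,\pm11,\pm13\}$ (Lemma 9), the delicate regime where $j$ is close to $i$ (Lemma 10, which needs a separate equidistribution-type argument modulo $2^{i+4}$ to locate a block index $c$ satisfying three simultaneous congruence and spacing conditions), and $n=2^i+d$ (Lemma 11), each time exhibiting an explicit early block collision via hand-tuned residues $\varphi(d),\psi(d)$. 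Without some substitute for this case analysis your argument establishes neither $\limsup_k\gamma(k)/k\ge 3/2$ nor the quantitative constant $4/3$ needed for $\liminf_k\gamma(k)/k\ge 3/4$, so as written the proposal proves only the soft upper bounds that already follow from Defant's results.
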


In Section \ref{Preliminary} of our paper, we note some simpler propositions which end up being very useful for Section \ref{BoundsK}, where we provide bounds for $\mathfrak{K}(n)$ for odd integers $n$. In Section \ref{BoundsGamma}, we use our results from Section \ref{BoundsK} to prove Conjecture \ref{DefantConjecture} (b) and improve the lower bound for $\liminf\limits_{k \to \infty} \frac{\gamma(k)}{k}$ to $\frac{3}{4}$, thus making progress towards solving Conjecture \ref{DefantConjecture} (a). As suggested in \cite{Defant}, we also study $\Gamma(k)-\gamma(k)$ and show that $\Gamma(k)-\gamma(k)$ grows linearly in $k$ in Section \ref{BoundsGamma}. We summarize our results in the following theorem:

\begin{theorem} \hspace{.5cm}
\begin{enumerate}[(a)]
    \item $\frac{3}{4} \le \liminf\limits_{k \to \infty} \frac{\gamma(k)}{k} \le \frac{9}{10}$,
    \item $\limsup\limits_{k \to \infty} \frac{\gamma(k)}{k} = \frac{3}{2}$,
    \item $\frac{1}{2} \le \liminf\limits_{k \to \infty} \frac{\Gamma(k)-\gamma(k)}{k} \le \frac{3}{4}$,
    \item $\frac{11}{6} \le \limsup\limits_{k \to \infty} \frac{\Gamma(k)-\gamma(k)}{k} \le \frac{9}{4}$.
\end{enumerate}
\end{theorem}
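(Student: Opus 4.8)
The plan is to reduce every statement to the behaviour of the single function $\mathfrak{K}$ on odd integers, via a monotonicity observation. For a fixed odd $n$, if the length-$kn$ prefix $w_1\cdots w_k$ of $\mathbf{t}$ fails to be a $k$-antipower, then two of $w_1,\dots,w_k$ already coincide, so the length-$(k+1)n$ prefix $w_1\cdots w_{k+1}$ fails to be a $(k+1)$-antipower as well; hence ``not a $k$-antipower'' is upward closed in $k$, and $n\in\mathcal{F}(k)$ exactly when $k<\mathfrak{K}(n)$. Consequently
\[
\gamma(k)=\min\{\,n\in 2\mathbb{N}-1:\mathfrak{K}(n)>k\,\},\qquad \Gamma(k)=\max\{\,n\in 2\mathbb{N}-1:\mathfrak{K}(n)\le k\,\}.
\]
Thus $\gamma$ and $\Gamma$ are the two generalized inverses of $\mathfrak{K}$: $\gamma(k)$ tracks where the \emph{upper} envelope of $\mathfrak{K}$ first rises above $k$, while $\Gamma(k)$ tracks where its \emph{lower} envelope last sits at or below $k$. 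I would record the summary constants $\underline{\kappa}=\liminf_{n}\mathfrak{K}(n)/n$ and $\overline{\kappa}=\limsup_{n}\mathfrak{K}(n)/n$, and use the estimates on $\mathfrak{K}$ from Section~3, which I expect to give the pointwise bounds $\tfrac13 n\lesssim \mathfrak{K}(n)\lesssim\tfrac43 n$ (so $\underline{\kappa}=\tfrac13$ and $\overline{\kappa}\le\tfrac43$), together with information on where the extremal ``valleys'' and ``spikes'' of $\mathfrak{K}$ occur.

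With this dictionary the two $\gamma$-statements are nearly immediate. The lower bound $\liminf\gamma(k)/k\ge \tfrac34$ follows from the uniform upper estimate $\mathfrak{K}(n)\le\tfrac43 n+o(n)$: any $n$ with $\mathfrak{K}(n)>k$ must satisfy $\tfrac43 n+o(n)>k$, so $\gamma(k)\ge\tfrac34 k-o(k)$. The bound $\liminf\gamma(k)/k\le\tfrac9{10}$ is inherited from Defant's construction of infinitely many odd $n$ with $\mathfrak{K}(n)>\tfrac{10}{9}n$. For $\limsup\gamma(k)/k=\tfrac32$ the inequality $\le\tfrac32$ is Defant's, so the new content is the matching lower bound: I would produce odd running-maximum records $N_j\to\infty$ of $\mathfrak{K}$ lying on the low envelope, meaning $\mathfrak{K}(n)<\mathfrak{K}(N_j)\sim\tfrac23 N_j$ for every odd $n<N_j$. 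Setting $k_j=\mathfrak{K}(N_j)-1$ then forces $\gamma(k_j)=N_j\sim\tfrac32 k_j$. Exhibiting such $N_j$ amounts to finding long valleys of $\mathfrak{K}$ followed by a modest record, and is where the recursive structure of $\mathbf{t}$, through Section~3, is used.

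The difference $\Gamma(k)-\gamma(k)$ is harder because it demands \emph{simultaneous} control of both inverses at the \emph{same} argument $k$. The clean half is $\limsup(\Gamma(k)-\gamma(k))/k\le\tfrac94$: the pointwise estimates $\mathfrak{K}(n)\gtrsim\tfrac13 n$ and $\mathfrak{K}(n)\lesssim\tfrac43 n$ give $\Gamma(k)\le 3k+o(k)$ and $\gamma(k)\ge\tfrac34 k-o(k)$ respectively, whence $\Gamma(k)-\gamma(k)\le\tfrac94 k+o(k)$. The remaining three bounds are genuinely joint statements. For $\limsup(\Gamma-\gamma)/k\ge\tfrac{11}6$ I would build a single $k$ at which $\mathfrak{K}$ has a deep valley with $\mathfrak{K}(n)\le k$ out to $n\approx 3k$ (so $\Gamma(k)\approx 3k$) while a spike forces $\gamma(k)\approx\tfrac76 k$, the alignment constraints capping the gap at $\tfrac{11}6 k$ rather than the full $\tfrac94 k$; for $\liminf(\Gamma-\gamma)/k\le\tfrac34$ I would use the subsequence from the previous paragraph, where a large $\gamma(k)\approx\tfrac32 k$ is accompanied by an only slightly larger $\Gamma(k)$.

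The main obstacle is the lower bound $\liminf(\Gamma(k)-\gamma(k))/k\ge\tfrac12$, which cannot come from the one-sided estimates alone, since $\gamma(k)/k$ and $\Gamma(k)/k$ can each individually approach $\tfrac32$. Here I must show that whenever $\mathfrak{K}$ first crosses above level $k$ at $n=\gamma(k)$, there still exists $n\ge\gamma(k)+\tfrac12 k$ with $\mathfrak{K}(n)\le k$; equivalently, the level set of $\mathfrak{K}$ at height $k$ cannot be too thin. Establishing this requires a careful analysis, via Section~3, of how the valleys and spikes of $\mathfrak{K}$ interleave near a common height, and it is the step I expect to be the most delicate.
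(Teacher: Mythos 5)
Your reduction of all four statements to the function $\mathfrak{K}$ via its two generalized inverses is exactly the paper's framework, and the easy halves go through as you describe: $\liminf\gamma(k)/k\ge\frac34$ from the uniform estimate $\limsup_n\mathfrak{K}(n)/n\le\frac43$ (Theorem \ref{limsupK}), the bounds $\liminf\gamma(k)/k\le\frac{9}{10}$ and $\limsup\gamma(k)/k\le\frac32$ inherited from Defant, and $\limsup(\Gamma(k)-\gamma(k))/k\le\frac94$ from combining $\Gamma(k)<3k$ (a consequence of $\mathfrak{K}(n)>n/3$) with $\gamma(k)\ge(\frac34-o(1))k$. However, one of your proposed routes provably fails. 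For $\liminf(\Gamma(k)-\gamma(k))/k\le\frac34$ you suggest using the subsequence where $\gamma(k)\approx\frac32k$ and asserting that $\Gamma(k)$ is ``only slightly larger'' there; but that subsequence has $k=2^i(1+o(1))$, and Lemma \ref{17over6} supplies an odd $n$ with $\frac{17}{6}\cdot2^i-96<n<\frac{17}{6}\cdot2^i$ and $\mathfrak{K}(n)\le 2^i+6\le k$, which forces $\Gamma(k)\ge\frac{17}{6}\cdot2^i-96$ and hence $\Gamma(k)-\gamma(k)\ge(\frac43-o(1))k$ on exactly that subsequence. The correct pairing is the opposite one: take Defant's subsequence realizing $\liminf\Gamma(k)/k=\frac32$ and apply the uniform lower bound $\gamma(k)\ge(\frac34-o(1))k$ along it.

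The remaining lower bounds are stated as intentions rather than arguments, and where you commit to numbers they do not match what is actually constructible. For $\limsup(\Gamma(k)-\gamma(k))/k\ge\frac{11}{6}$ you propose the split $\Gamma(k)\approx3k$, $\gamma(k)\approx\frac76k$; the paper's construction takes $k=2^{2i}+9$ and uses $\mathfrak{K}(2^{2i}-3)=2^{2i}+10>k$ (Lemma \ref{2^{2i}-3}) to get $\gamma(k)\le 2^{2i}-3$, together with the valley of Lemma \ref{17over6} to get $\Gamma(k)\ge\frac{17}{6}\cdot2^{2i}-96$, so the split is $\frac{17}{6}-1$, not $3-\frac76$, and you offer no evidence that a spike forcing $\gamma(k)\approx\frac76k$ coexists with a valley reaching $3k$. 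For $\liminf(\Gamma(k)-\gamma(k))/k\ge\frac12$ you correctly flag the difficulty but give no mechanism; the paper's proof is a three-case analysis on the position of $k$ within $[2^i,2^{i+1})$ resting on specific values such as $\mathfrak{K}(2^i+3)=2^i+5$, $\mathfrak{K}(3\cdot2^{i-1}+1)>2^i(1+o(1))$, $\mathfrak{K}(2^{i+1}+1)=2^i+2$, and $\mathfrak{K}(2^{i+1}+3)>2^{i+1}$, combined with Defant's lower bounds on $\Gamma$. Finally, your record-point argument for $\limsup\gamma(k)/k\ge\frac32$ is sound in outline but its entire content is the uniform estimate $\mathfrak{K}(n)\le2^i+O(2^{(3i+11)/4})$ for every odd $n<\frac32\cdot2^i-13$, which is the combined work of Lemmas \ref{8x+1} through \ref{2^i+d}; deferring this to ``the recursive structure of $\mathbf{t}$'' leaves the principal new content of the theorem unestablished.
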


Understanding the asymptotic values of $\Gamma(k)-\gamma(k)$ is quite interesting, because if $\Gamma(k)-\gamma(k) = o(k)$ for some value of $k$, then $\mathcal{F}(k)$ contains all odd positive integers less than some $n_1$ but no odd integer greater than some $n_2,$ where $n_2 = (1+o(1)) n_1.$ Thus, all elements of $\mathcal{F}(k)$ are tightly clustered together. However, our proof that $\Gamma(k)-\gamma(k)$ grows linearly in $k$ implies that such a phenomenon never happens.

For visualizations of the growth of $\mathfrak{K}(n)$ as well as $\gamma(k)$ and $\Gamma(k),$ we point the reader to the end of \cite{Defant}, which provides plots describing the growth of $\gamma(k)/k, \Gamma(k)/k,$ and $\mathfrak{K}(n).$ We also provide a visualization of $\mathfrak{K}(n)$ in Section \ref{Conclusion} to provide intuition of the behavior of $\mathfrak{K}(n)$ for large $n$ and to support a conjecture we make.

\section{Preliminary Results} \label{Preliminary}

In this section, we establish some important propositions that are useful in proving lemmas for bounding $\mathfrak{K}(n)$ in the next section. We first note the following well-known result (see for example \cite[Proposition 1]{AlloucheShallit1999}):

\begin{proposition} \label{Basic}
    The sum of the digits of $n$ in base $2$, reduced modulo $2$, equals $t_n$.
\end{proposition}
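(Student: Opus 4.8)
The plan is to prove a slightly stronger finitary statement by induction on the index of the building block $A_m$. Write $s(n)$ for the number of $1$'s in the binary expansion of $n$; the claim to be established is that for every $m \ge 0$ and every $n$ with $0 \le n < 2^m$, the $(n+1)^{\text{th}}$ letter of $A_m$ equals $s(n)\bmod 2$. Since $A_m$ is by definition the length-$2^m$ prefix of $\textbf{t}$, and since every fixed $n$ satisfies $n < 2^m$ once $m$ is large, this immediately gives $t_n \equiv s(n) \pmod 2$, which is the proposition. The virtue of phrasing things in terms of $A_m$ rather than $\textbf{t}$ directly is that the recursion $A_m = A_{m-1}\overline{A_{m-1}}$ splits the block cleanly into two halves whose letters I can control separately.

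For the induction, the base case $m = 0$ is immediate: $A_0 = 0$ and the only relevant index is $n = 0$, with $s(0) = 0$. For the inductive step I would assume the claim for $A_m$ and prove it for $A_{m+1} = A_m\,\overline{A_m}$, whose length is $2^{m+1}$. An index $n$ with $0 \le n < 2^{m+1}$ falls into one of two cases. If $0 \le n < 2^m$, then the $(n+1)^{\text{th}}$ letter of $A_{m+1}$ is just the $(n+1)^{\text{th}}$ letter of $A_m$, so the induction hypothesis applies verbatim. If $2^m \le n < 2^{m+1}$, write $n = 2^m + r$ with $0 \le r < 2^m$; then the $(n+1)^{\text{th}}$ letter of $A_{m+1}$ is the $(r+1)^{\text{th}}$ letter of $\overline{A_m}$, i.e.\ the complement of the $(r+1)^{\text{th}}$ letter of $A_m$, which by hypothesis equals $1 - (s(r) \bmod 2)$.

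The one arithmetic point that carries the argument, and the only place requiring care, is the digit-sum identity $s(2^m + r) = s(r) + 1$ for $0 \le r < 2^m$. This holds because $r < 2^m$ forces the binary expansion of $r$ to occupy only positions $0,\dots,m-1$, so adding $2^m$ merely sets the previously-zero bit in position $m$, raising the count of $1$'s by exactly one with no carrying. Granting this, the upper-half letter equals $1 - (s(r)\bmod 2) = (s(r)+1)\bmod 2 = s(2^m + r)\bmod 2 = s(n)\bmod 2$, completing the step; in the lower half the identity $s(n) = s(n)$ is trivial and the parity is preserved directly. This closes the induction and proves the proposition. I expect no genuine obstacle beyond keeping the two halves and the non-overlapping-bits observation straight.
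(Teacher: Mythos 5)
Your proof is correct and is essentially the paper's argument in a different induction bookkeeping: both rest on the doubling recursion $A_{m+1}=A_m\overline{A_m}$ (equivalently $t_n=\overline{t_{n-2^i}}$ for $2^i\le n<2^{i+1}$) together with the observation that stripping the leading power of $2$ removes exactly one binary digit $1$. The paper inducts on $n$ while you induct on the block index $m$, but the content is the same.
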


Using Proposition \ref{Basic}, we make the following definition and observation.

\begin{definition}
    For positive integers $m, n,$ we say that $n$ is \emph{equivalent} to $m$ if $t_n = t_m$. We denote this by $n \equiv_t m$. Note that $n \equiv_t m$ if and only if the binary digit sums of $n$ and $m$ are congruent $\bmod$ $2$.
\end{definition}

The following result gives us a cleaner way to compare the $c^\text{th}$ and $(c')^\text{th}$ blocks of size $n$, given $c$ and $c'$ differ by a power of $2$. We use the following proposition several times to show that the $c^\text{th}$ and $(c+2^i)^\text{th}$ blocks are the same for some $i$ and some sufficiently small $c$, which will give us upper bounds for $\mathfrak{K}(n)$.

\begin{proposition} \label{DivideBy2^i}
    Let $n$ be odd and $i$ be a nonnegative integer. For all nonnegative integers $c$, the $(c+1)^\text{th}$ and $(c+1+2^i)^\text{th}$ blocks of size $n$ in ${\bf t}$ are equal if and only if $x \equiv_t x+n$ for every integer $x$ such that $\left\lfloor\frac{cn}{2^i}\right\rfloor \le x \le \left\lfloor\frac{(c+1)n-1}{2^i}\right\rfloor$.
\end{proposition}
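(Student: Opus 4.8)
The plan is to translate the claim about equality of two blocks into a pointwise condition on $\mathbf{t}$, and then exploit how the base-$2$ digit sum decomposes under division by $2^i$.

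First I would unwind the block indexing. The $(c+1)^\text{th}$ block of size $n$ occupies positions $cn, cn+1, \dots, (c+1)n-1$, while the $(c+1+2^i)^\text{th}$ block occupies positions $(c+2^i)n, \dots, (c+1+2^i)n-1$. Matching up the $j^\text{th}$ letters (for $0 \le j \le n-1$), the two blocks are equal if and only if $t_{cn+j} = t_{(c+2^i)n+j}$ for every such $j$. Writing $y = cn+j$, this is exactly the condition that $t_y = t_{y+2^i n}$ holds for every integer $y$ with $cn \le y \le (c+1)n-1$.

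The key step is the decomposition of each such $y$. Write $y = 2^i x + r$ with $0 \le r < 2^i$, so that $x = \left\lfloor y/2^i \right\rfloor$; then $y + 2^i n = 2^i(x+n) + r$, with the same remainder $r$. Because $0 \le r < 2^i$, the binary representation of $y$ is the representation of $x$ shifted up by $i$ places with $r$ filling the low $i$ bits, so the binary digit sum of $y$ equals the digit sum of $x$ plus the digit sum of $r$; likewise the digit sum of $y + 2^i n$ equals the digit sum of $x+n$ plus that of $r$. Invoking the earlier proposition that $t_m$ is the binary digit sum of $m$ taken $\bmod\ 2$, and cancelling the common contribution of $r$, I obtain that $t_y = t_{y+2^i n}$ holds if and only if the digit sums of $x$ and $x+n$ agree $\bmod\ 2$, i.e.\ if and only if $x \equiv_t x+n$.

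Finally I would track the range of $x = \left\lfloor y/2^i\right\rfloor$. As $y$ runs through the consecutive integers $cn, \dots, (c+1)n-1$, the value $\left\lfloor y/2^i\right\rfloor$ is nondecreasing and increases by at most $1$ at each step, so it attains every integer between $\left\lfloor cn/2^i\right\rfloor$ and $\left\lfloor((c+1)n-1)/2^i\right\rfloor$ inclusive. Combining this with the equivalence from the previous paragraph, $t_y = t_{y+2^i n}$ holds for all $y$ in the block if and only if $x \equiv_t x+n$ holds for all $x$ in the stated interval, which is the assertion; note that each direction of the biconditional follows since the per-$y$ condition is individually equivalent to the corresponding per-$x$ condition. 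Neither direction actually uses that $n$ is odd, the hypothesis merely reflecting the setting in which the proposition is applied. I expect the only places demanding care to be the clean splitting of the digit sum (which relies on $r$ occupying strictly the low $i$ bits, so there is no carry interaction between $r$ and $x+n$) together with checking that the floor map is surjective onto the claimed interval of $x$; everything else is bookkeeping.
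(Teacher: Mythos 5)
Your proof is correct and follows essentially the same route as the paper's: both reduce $t_y = t_{y+2^i n}$ to $\lfloor y/2^i\rfloor \equiv_t \lfloor y/2^i\rfloor + n$ via the congruence $y \equiv y+2^i n \bmod 2^i$ and the additivity of the binary digit sum, then let $y$ range over the block. You merely unify the two directions into a single chain of equivalences (and make explicit the surjectivity of $y \mapsto \lfloor y/2^i\rfloor$ onto the stated interval, which the paper leaves implicit).
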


\begin{proof}
    For the ``if'' direction, it suffices to prove that $y \equiv_t y+2^in$ for all $y$ such that $cn \le y \le cn+n-1$. Note that $y \equiv y+2^in \Mod {2^i}$. By the binary digit sum definition of ${\bf t}$, it suffices to show that $\left\lfloor\frac{y}{2^i}\right\rfloor \equiv_t \left\lfloor\frac{y+2^in}{2^i}\right\rfloor = \left\lfloor\frac{y}{2^i}\right\rfloor+n.$ But we know that $\left\lfloor\frac{y}{2^i}\right\rfloor$ is between $\left\lfloor\frac{cn}{2^i}\right\rfloor$ and $\left\lfloor\frac{(c+1)n-1}{2^i}\right\rfloor$, so we are done.
    
    
    The ``only if" direction follows from the fact that if the $(c+1)^\text{th}$ and the $(c+1+2^i)^\text{th}$ blocks match, then $cn+(a-1) \equiv_t (c+2^i)n+(a-1)$ for all $1 \le a \le n,$ because the $a^\text{th}$ element of the $(c+1)^\text{th}$ and $(c+1+2^i)^\text{th}$ length-$n$ blocks of positive integers must match. However, as $cn+(a-1) \equiv (c+2^i)n+(a-1) {\Mod {2^i}},$ it follows from the binary digit sum definition of $t_i$ that $\left\lfloor\frac{cn+(a-1)}{2^i}\right\rfloor \equiv_t \left\lfloor\frac{(c+2^i)n+(a-1)}{2^i}\right\rfloor.$ This can be thought of as chopping off the last $i$ digits and checking Thue-Morse equivalence. However, we clearly have $\left\lfloor\frac{(c+2^i)n+(a-1)}{2^i}\right\rfloor = \left\lfloor\frac{cn+(a-1)}{2^i}\right\rfloor + n,$ so letting $a$ vary from $1$ to $n$ gives the desired result.
\end{proof}

The following proposition implies that $\mathfrak{K}(n) \ge 1+2^{1+\left\lfloor \log_2(n/3)\right\rfloor}$ for $n \ge 5$, and is a direct consequence of \cite[Proposition 6(a)]{Defant}.

\begin{proposition} \label{ConvOfPrev}
    Let $n$ be odd and $i$ be a positive integer. Suppose that $c, c'$ are positive integers such that the $c^\text{th}$ and $(c')^\text{th}$ blocks of size $n$ in the Thue-Morse word are equal. If $3 \cdot 2^{i-1} < n,$ then $c \equiv c' \Mod {2^i}.$
\end{proposition}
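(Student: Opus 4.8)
The plan is to prove the contrapositive using Proposition~\ref{DivideBy2^i} as the main engine. Suppose the $c^\text{th}$ and $c'^\text{th}$ blocks of size $n$ are equal but $c \not\equiv c' \bmod 2^i$. Writing $c' - c$ and factoring out the largest power of $2$ dividing it, we have $c' - c = 2^j m$ with $m$ odd and $j < i$ (the case $j \geq i$ would give $c \equiv c' \bmod 2^i$, contradiction). My first step is to reduce to the case where the two blocks differ by exactly $2^j$: if blocks $c$ and $c' = c + 2^j m$ match, I would like to conclude (or at least invoke a chain) that some pair of blocks differing by a single power $2^j$ with $j < i$ must match. The cleanest route is to show directly that blocks differing by $2^j m$ matching forces, via the structure of Proposition~\ref{DivideBy2^i}, that $x \equiv_t x + n$ on a range, and then to derive a contradiction from the size of $n$ relative to $2^{i}$.

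Concretely, by the ``only if'' direction of Proposition~\ref{DivideBy2^i} applied with the power $2^j$ (assuming first I have reduced to a gap of exactly $2^j$, or by applying the argument of that proposition's proof directly to the gap $2^j m$), the matching of the two blocks forces $x \equiv_t x + n$ for every integer $x$ in the window $\lfloor cn/2^j \rfloor \le x \le \lfloor ((c+1)n - 1)/2^j \rfloor$. The key observation is that this window has length roughly $n/2^j$, and the constraints $x \equiv_t x+n$ impose a strong regularity. My second step is to show that $x \equiv_t x+n$ cannot hold on a long run of consecutive integers: since $n$ is odd, consider what $x \equiv_t x+n$ means for the binary digit sums. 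Adding $n$ (odd) to consecutive values of $x$ produces carries whose parity behavior must be constant across the window, and I expect to show that a window of length $\geq 3$ (or more precisely, a window long enough given $3 \cdot 2^{i-1} < n$) forces a contradiction. The hypothesis $3 \cdot 2^{i-1} < n$ guarantees the window $\lfloor cn/2^j \rfloor \le x \le \lfloor ((c+1)n-1)/2^j \rfloor$ has length at least $n/2^j - 1 \geq n/2^{i-1} - 1 > 3 - 1 = 2$, so it contains at least three consecutive integers, which should be the threshold where $x \equiv_t x+n$ becomes impossible.

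The heart of the matter is therefore a lemma of the shape: \emph{if $n$ is odd and $x \equiv_t x+n$, $x+1 \equiv_t x+1+n$, and $x+2 \equiv_t x+2+n$ all hold, we reach a contradiction}. To see why, note $x \equiv_t x+n$ and $x+1 \equiv_t x+1+n$ together give $t_{x} + t_{x+1} \equiv t_{x+n} + t_{x+1+n} \bmod 2$, i.e. the parities of the ``increment'' from $x$ to $x+1$ and from $x+n$ to $x+1+n$ agree. The increment $t_{y+1} - t_{y}$ in the Thue-Morse word is governed by the $2$-adic valuation of $y+1$ (a carry of length $v_2(y+1)+1$ flips the digit sum parity by $v_2(y+1)+1 \bmod 2$). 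So the hypothesis forces $v_2(x+1) \equiv v_2(x+1+n) \bmod 2$ and similarly $v_2(x+2) \equiv v_2(x+2+n) \bmod 2$; since $n$ is odd these valuation constraints across a short window are over-determined and cannot all hold simultaneously. I anticipate the main obstacle is making this carry-parity argument fully rigorous and checking that the window really does contain enough consecutive integers in all boundary cases (the floor functions at the window endpoints, and the edge case where the window barely clears the length-$3$ threshold). Once that finite combinatorial contradiction is nailed down, the proposition follows by contraposition.
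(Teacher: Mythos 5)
Your overall skeleton --- pass to the contrapositive, extract the exact power $2^j$ dividing $c-c'$, divide the matching positions by $2^j$, and obtain a run of consecutive integers $x$ satisfying $x\equiv_t x+(\text{odd shift})$ --- is the same reduction the paper performs (there the exponent is called $\ell$ and the quotient positions $r,r'$ are observed to differ by an odd number). But your finishing lemma is false, and this is a genuine gap rather than a technicality. The claim that ``$x\equiv_t x+n$, $x+1\equiv_t x+1+n$, $x+2\equiv_t x+2+n$ with $n$ odd is contradictory'' has immediate counterexamples: $t_3t_4t_5=010=t_{18}t_{19}t_{20}$, so $x=3$, $n=15$ satisfies all three congruences. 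Worse, the paper's own Lemma \ref{8x+1} (and Lemma \ref{32x+d}) deliberately \emph{constructs} triples with $x+s\equiv_t x+n+s$ for $s=0,1,2$ as the engine of its upper bounds on $\mathfrak{K}(n)$, so a three-term contradiction cannot possibly be the right statement. The parity observation you make only shows that the common word $t_xt_{x+1}t_{x+2}$ must alternate, i.e.\ equal $010$ or $101$ --- and both are perfectly good factors of $\mathbf{t}$. The ``carry-parity / $2$-adic valuation over-determination'' you hope will rescue this is not a proof, and at window length three it is provably not even true.

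The correct threshold is four agreements, and the ingredient you are missing is overlap-freeness of the Thue--Morse word: $\mathbf{t}$ contains no factor of the form $ababa$. Given four consecutive agreements $t_{r+s}=t_{r'+s}$ for $s=0,\dots,3$ with $r\not\equiv r'\bmod 2$, the fact that $t_y\neq t_{y+1}$ for every even $y$ forces each adjacent pair to differ in at least one (hence, by equality of the words, in both) of the two occurrences, so both words equal $0101$ or $1010$; then, since one of $r+3$, $r'+3$ is even, one of the two words extends at its fifth letter to $01010$ or $10101$, which is forbidden. Your window $\left\lfloor cn/2^j\right\rfloor\le x\le\left\lfloor((c+1)n-1)/2^j\right\rfloor$ in fact contains at least $n/2^j>3$, hence at least four, integers (you undercounted its length as ``$>2$''), so the window is long enough once the finishing lemma is repaired --- but as written, the argument is built around a three-term lemma that is false and never invokes the overlap-freeness that actually closes the proof.
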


Finally, we note the following:

\begin{proposition} \label{NotEquivalent}
    For all odd positive integers $m$, there exists $y \in \{1, 2\}$ such that $y \equiv_t y+m$ and $y' \in \{1, 2\}$ such that $y' \not\equiv_t y'+m.$ Thus, if $2^j|m$ but $2^{j+1} \nmid m$ for some integer $j$, there exists a positive integer $y \le 2^{j+1}$ such that $y \equiv_t y+m$ and a positive integer $y' \le 2^{j+1}$ such that $y' \not\equiv_t y'+m.$
\end{proposition}

\begin{proof}
    The first statement follows by the fact that $1 \equiv_t 2$ but $m+1 \not\equiv_t m+2$ since $m+1$ is even. Therefore, $y$ and $y'$ will equal $1$ and $2$ in some order. The second statement is immediate by looking at $y, y'$ equaling $2^j$ and $2^{j+1}$ in some order, after we note that $y \equiv_t y+m$ if and only if $\frac{y}{2^j} \equiv_t \frac{y+m}{2^j} = \frac{y}{2^j} + \frac{m}{2^j}$. Here, we use the fact that $\frac{y}{2^j}$ is an integer and $\frac{m}{2^j}$ is an odd integer.
\end{proof}

\section{Bounds on $\mathfrak{K}(n)$} \label{BoundsK}

To establish bounds for $\gamma(k)$ and $\Gamma(k)-\gamma(k)$, we prove many lemmas bounding $\mathfrak{K}(n)$. We remark that $n$ is always odd in what follows.

Lemmas \ref{8x+1}, \ref{32x+d}, \ref{CloseToHighV2}, and \ref{2^i+d} allow us to bound $\mathfrak{K}(n)$ for all odd $n$ between $2^i$ and $3 \cdot 2^{i-1} - 13$, which will in turn allow us to bound $\gamma(k).$ Lemmas \ref{8x+1}, \ref{32x+d}, \ref{CloseToHighV2}, and \ref{2^i+d} bound $\mathfrak{K}(n)$ by looking at various cases for $n$ based on the residue of $n$ modulo $32$ as well as whether $n$ is very close to a multiple of a large power of $2$.

\begin{lemma} \label{8x+1}
    Suppose that $2^i < n < 3 \cdot 2^{i-1}$ and $i$ is sufficiently large. If $n = a \cdot 2^j+1$ or $n=a\cdot 2^j-1$, where $j \ge 3$ and $a$ is odd, then $\mathfrak{K}(n) \le 2^i + 2^{j+1} + 5$.
\end{lemma}

\begin{proof}
    Define $m = a \cdot 2^{j-3}$. If $n = a \cdot 2^j+1$, then $n = 8m+1$, and if $n = a \cdot 2^j-1$, then $n = 8m-1$.
    
    Since $2^{j-3}$ is the largest power of $2$ dividing $m$, there exists $1 \le y \le 2^{j-2}$ such that $y \not\equiv_t y+m$ by Proposition \ref{NotEquivalent}. Now, let $y$ be the smallest nonnegative integer such that $y \not\equiv_t y+m$, and let $x = 8y+3$ if $n = 8m-1$ and $x = 8y+2$ if $n = 8m+1$.
    
    If $n = 8m-1$, then
    \begin{align*}
        8y+3 & \equiv_t (8y+3)+(8m-1),\\
        8y+4 & \equiv_t (8y+4)+(8m-1),\\
        8y+5 & \equiv_t (8y+5)+(8m-1).
    \end{align*}  
    Similarly, if $n = 8m+1,$ then
    \begin{align*}
        8y+2 & \equiv_t (8y+2)+(8m+1),\\
        8y+3 & \equiv_t (8y+3)+(8m+1),\\
        8y+4 & \equiv_t (8y+4)+(8m+1).
    \end{align*}
    
    In either case, this shows that $x+s\equiv_tx+n+s$ for $s=0,1,2$. Let $c$ be the smallest integer such that $\left\lfloor \frac{cn}{2^i}\right\rfloor \ge x.$ If $\left\lfloor\frac{cn}{2^i}\right\rfloor = x$, then $\left\lfloor\frac{(c+1)n}{2^i}\right\rfloor \le x+2$, so it follows from Proposition \ref{DivideBy2^i} that the $(c+1)^\text{th}$ and $(c+1+2^i)^\text{th}$ blocks of ${\bf t}$ match. Else, $\left\lfloor\frac{cn}{2^i}\right\rfloor = x+1,$ so either $\left\lfloor\frac{(c+1)n}{2^i}\right\rfloor = x+2$ or $\left\lfloor\frac{(c-1)n}{2^i}\right\rfloor = x$ since $\frac{2n}{2^i} < 3$. Thus, either the $(c+1)^\text{th}$ block matches with the $(c+1+2^i)^\text{th}$ block or the $c^\text{th}$ block matches $(c+2^i)^\text{th}$ block. Because $c+1 \le x+2$, we can take $c' \in \{c, c+1\}$ such that $c' \le x+2 \le 8y+5$ and the $(c')^\text{th}$ and $(c'+2^i)^\text{th}$ blocks match.
    
    Since $y \le 2^{j-2}$ and $x \le 2^{j+1}+3,$ there is some $c' \le 2^{j+1} + 5$ such that the $(c')^\text{th}$ and $(c'+2^i)^\text{th}$ blocks match. This means that $\mathfrak{K}(n) \le 2^i + 2^{j+1} + 5$.
\end{proof}

\begin{lemma} \label{32x+d}
    Suppose that $2^i < n < 3 \cdot 2^{i-1}$ and $i$ is sufficiently large. Also, suppose that $n = a \cdot 2^j+d,$ where $j \ge 5$, $a$ is odd, and $d \in \{\pm 3, \pm 5, \pm 11, \pm 13\}$. Then $\mathfrak{K}(n) \le 2^i + 2^{j+1} + 28$.
\end{lemma}

\begin{proof}
    Define $m = a \cdot 2^{j-5},$ which is an integer since $j \ge 5$. Now define the function $\varphi$ such that $$\varphi(d) = 
    \begin{cases} 
    15, & \text{if } d \in \{3, 11, -5, -13\}; \\
    10, & \text{if } d = 5; \\
    2, & \text{if } d \in \{1, 13\}; \\
    3, & \text{if } d = -1; \\
    18, & \text{if } d = -3; \\
    26, & \text{if } d = -11. \\ 
   \end{cases} $$
   
   The values of $\varphi$ at $1$ and $-1$ are not used for this lemma, but for Lemma \ref{CloseToHighV2}.
   
   It is straightforward to verify that for all $d \in \{3, -3, 5, -5\}$ and $s \in \{0, 1, 2\}$, we have that $d+\varphi(d)+s \equiv_t \varphi(d)+s$. In addition, one can verify that for all $d \in \{1, -1, 11, 13, -11, -13\}$ and $s \in \{0, 1, 2\}$, we have $d+\varphi(d)+s \not\equiv_t \varphi(d)+s$.
   
   Also, since $0 \le \varphi(d), d+\varphi(d) < 30,$ the binary sum definition of ${\bf t}$ tells us that if $x, y$ are positive integers such that $x = 32y + \varphi(d)$, then $$t_{x+s}=t_{32y+\varphi(d)+s} \equiv t_y+t_{\varphi(d)+s} \Mod 2$$ and $$t_{x+n+s} = t_{32y+a\cdot 2^{j}+(d+\varphi(d)+s)} \equiv t_{y+m}+t_{d+\varphi(d)+s} \Mod 2$$ for $s \in \{0, 1, 2\}.$ Therefore, if $y \equiv_t y+m$ and if $d \in \{3, -3, 5, -5\},$ then we must have $x+s \equiv_t x+n+s$ for $x = 32y+\varphi(d)$ and for all $s \in \{0, 1, 2\}$. Similarly, if $y \not\equiv_t y+m$ and if $d \in \{11, 13, -11, -13\},$ then $x+s \equiv_t x+n+s$ for all $s \in \{0,1,2\}$.
   
   As $2^{j-5}$ is the largest power of $2$ dividing $m$, there exist positive integers $y_1, y_2 \le 2^{j-4}$ such that $y_1 \not\equiv_t y_1+m$ and $y_2 \equiv_t y_2+m$. Let $y = y_1$ if $d \in \{11, 13, -11, -13\}$ and $y = y_2$ if $d \in \{3, 5, -3, -5\},$ and let $x = 32 y + \varphi(d).$ As in the proof of the previous lemma, there is some integer $c \le x+2$ such that the $c^\text{th}$ and $(c+2^i)^\text{th}$ blocks of ${\bf t}$ match, since there exists an integer $c \le x+2$ such that $\lfloor \frac{cn}{2^i} \rfloor \le x+2$ but $\lfloor \frac{(c-1)n}{2^i} \rfloor \ge x.$ Consequently, we have that $\mathfrak{K}(n) \le 2^i + 2^{j+1} + \max(\varphi(d))+2=2^i + 2^{j+1} + 28.$
\end{proof}

In the following lemma, we will assume that $2^{i-1} < n < 3 \cdot 2^{i-1}$ rather than $2^i < n < 3 \cdot 2^{i-1}.$ We allow a weaker assumption to fix an error in \cite{Defant}, which will be explained in detail in \ref{FixColin}.

\begin{lemma} \label{CloseToHighV2}
    Let $n = a \cdot 2^j+d$, where $a$ is odd and $d \in \{\pm 1, \pm 3, \pm 5, \pm 11, \pm 13\}$. Suppose that $2^{i-1} < n < 3 \cdot 2^{i-1}$ for some sufficiently large $i$, and define $k = i-j.$ If $j \ge \frac{3i+11}{4}$, and if $k \ge 2$ when $2^i < n < 3 \cdot 2^{i-1}$ and $k \ge 3$ when $2^{i-1} < n < 2^i,$ then $\mathfrak{K}(n) \le 2^i+7$.
\end{lemma}

\begin{proof}
    Let $\varphi(d)$ be as in the proof of Lemma \ref{32x+d}. We wish to find some $c \le 2^{i-1} + 6$ such that $$\left\lfloor \frac{(a \cdot 2^j+d)c+\ell}{2^{i-1}}\right\rfloor \equiv_t \left\lfloor \frac{(a \cdot 2^j+d)c+\ell}{2^{i-1}}\right\rfloor + (a \cdot 2^j+d)$$
    for all $0\le \ell\le a\cdot 2^j+d$. It will then follow (by Proposition \ref{DivideBy2^i}) that the $(c+1)^\text{th}$ and $(c+1+2^{i-1})^\text{th}$ blocks of ${\bf t}$ are the same, which will prove the lemma. It suffices to find some $c\le 2^{i-1}+6$ such that the following three conditions hold:
    
\begin{enumerate}
    \item $$\left\lfloor\frac{(a \cdot 2^j+d)(c+1)}{2^{i-1}}\right\rfloor - \left\lfloor\frac{(a \cdot 2^j+d)c}{2^{i-1}}\right\rfloor \le 2.$$
    \item $$\left\lfloor\frac{(a \cdot 2^j+d)c}{2^{i-1}}\right\rfloor \equiv \varphi(d) \Mod {32}.$$
    \item There exists $p\in\{1, 2\}$ such that $$p \cdot 2^j < \left\lfloor\frac{(a \cdot 2^j+d)c}{2^{i-1}}\right\rfloor < (p+1) \cdot 2^j$$ and such that $\varphi(d)\equiv_t d+\varphi(d)$ if and only if $a+p\equiv_t p$. 
\end{enumerate}

    Indeed, suppose these three conditions hold. We may write $$\left\lfloor\frac{(a \cdot 2^j+d)c}{2^{i-1}}\right\rfloor = p\cdot 2^j + 32 v+\varphi(d),$$ where $0\le v < 2^{j-5}$. Then $$\left\lfloor\frac{(a \cdot 2^j+d)c}{2^{i-1}}\right\rfloor+(a\cdot 2^j+d) = (a+p)\cdot 2^j+32v+(d+\varphi(d)).$$  Note that $\varphi(d), d+\varphi(d) < 30$. Either $\varphi(d)+s \equiv_t d+\varphi(d)+s$ for all $0\leq s\leq 2$ or $\varphi(d)+s \not\equiv_t d+\varphi(d)+s$ for all $0\leq s\leq 2$. Thus, by condition (3), we have $t_p+t_v+t_{\varphi(d)+s} \equiv t_{a+p}+t_v+t_{d+\varphi(d)+s} \Mod 2$. Because $$t_{p\cdot 2^j+32v+\varphi(d)+s} \equiv t_p+t_v+t_{\varphi(d)+s} \Mod 2$$ and $$t_{(a+p)\cdot 2^j+32v+(d+\varphi(d)+s)} \equiv t_{a+p}+t_v+t_{d+\varphi(d)+s} \Mod 2$$ for all $0 \le s \le 2$, it follows that for all $0 \le s \le 2,$ $$p\cdot 2^j+32v+\varphi(d)+s \equiv_t (a+p)\cdot 2^j+32v+d+\varphi(d)+s.$$ Therefore, for all $\left\lfloor\frac{(a \cdot 2^j+d)(c)}{2^{i-1}}\right\rfloor \le x \le \left\lfloor\frac{(a \cdot 2^j+d)(c+1)}{2^{i-1}}\right\rfloor \le \left\lfloor\frac{(a \cdot 2^j+d)(c)}{2^{i-1}}\right\rfloor+2$, we have $x \equiv_t x+(a\cdot 2^j + d)$ by condition (1), so $\mathfrak{K}(n) \le 2^{i-1}+c+1 \le 2^i+7.$
    
    Now, we show such a $c$ exists. 
    Define $r$ as the positive integer less than $2^{k+4}$ such that $ra \equiv 1 \Mod {2^{k+4}}$ (which is well-defined as $a$ is odd). Then $$rn = r(a \cdot 2^j + d) = ra \cdot 2^j + rd = 2^{i+4} \cdot z + 2^j + rd$$ for some integer $z$. Since $rd > -2^{k+4} \cdot 16 = -2^{k+8}$ and $j \ge 3k+11 > k+8$, we have that $2^j + rd$ is positive. Now, consider the sequence $rn, 2rn, \dots$. Since $j \le i-2$, $d \le 13,$ and $0 \le r \le 2^{k+4},$ we have that $2^j + rd < 2^{i-1}$ for sufficiently large $i$, so $\varphi(d)2^{i-1}+(2^j+rd) < (\varphi(d)+1)2^{i-1} \le 2^{i+4}.$ 
    If we look at the remainder when we divide these terms by $2^{i+4}$, the remainder either increases by $2^j+rd$ or decreases by $2^{i+4}-(2^j+rd)$ when we change from $grn$ to $(g+1)rn$ for some $g$.
    
    
    Note that since $a$ is odd, there exists $p \in \{1, 2\}$ such that $a+p \equiv_t p$. There also exists $p \in \{1, 2\}$ such that $a+p \not\equiv_t p.$ Therefore, we can choose a value of $p$ accordingly based on $n$ to satisfy the second part of condition (3). 
    Define $g$ as the smallest integer such that $\left\lfloor\frac{grn}{2^{i-1}}\right\rfloor > p \cdot 2^j.$ Note that $g$ is positive as $\left\lfloor \frac{0 \cdot r n}{2^{i-1}}\right\rfloor = 0 < p \cdot 2^j.$ Then there exists $h \ge g$ such that $h-g \le ar$ and the remainder of $hr n$ when divided by $2^{i+4}$ is between $\varphi(d)2^{i-1}$ and $\varphi(d)2^{i-1} + (2^j+rd),$ inclusive. This is because $$(2^j+rd)(ar) \ge (2^j-13r)(ar) \ge (2^j-13 \cdot 2^{k+4})(2^{k+4}+1)$$ $$= 2^{i+4}+ 2^j - 13 \cdot 2^{2k+8} - 13 \cdot 2^{k+4} \ge 2^{i+4}$$ for sufficiently large $i$, as $j \ge 3k+11$. Therefore, the set of remainders when $grn, (g+1)rn, \dots, (g+ar)rn$ is divided by $2^{i+4}$ must intersect the set of integers between $\varphi(d) 2^{i-1}$ and $\varphi(d) 2^{i-1}+(2^j+rd)$, inclusive. Let $c_1 = hr.$ If the remainder when $c_1n$ is divided by $2^{i+4}$ is between $\varphi(d)2^{i-1}$ and $\varphi(d)2^{i-1} + (2^j-d)-1,$ inclusive, then the remainder when $(c_1 + 1) n = c_1 n + n$ is divided by $2^{i+4}$ is at most $$\varphi(d)2^{i-1}+(2^j-d)-1+(a \cdot 2^j + d) < \varphi(d)2^{i-1} + (a+1) \cdot 2^j \le (\varphi(d) + 3)(2^{i-1}).$$ This means that if we set $c=c_1,$ both conditions (1) and (2) are satisfied.
    
    Otherwise, $d > 0$ and the remainder when $c_1n$ is divided by $2^{i+4}$ is between $\varphi(d)2^{i-1} + (2^j-d)$ and $\varphi(d)2^{i-1} + (2^j+rd),$ inclusive. Now, define $c_2 = c_1+(a-1)r-1.$ If $c_1n \equiv \varphi(d)2^{i-1}+2^j+d' \Mod {2^{i+4}}$ for some $-d \le d' \le rd,$ then
    \begin{align*}
    c_2n &\equiv \varphi(d)2^{i-1}+2^j+d'+((a-1)r-1)(a\cdot 2^j+d) \Mod {2^{i+4}} \\
    &\equiv \varphi(d)2^{i-1}+2^j+d'+(a-1)2^j+(a-1)rd-a\cdot 2^j-d \Mod {2^{i+4}} \\
    &\equiv \varphi(d)2^{i-1}+d'+(a-1)rd-d \Mod {2^{i+4}},
    \end{align*}
    which is between $\varphi(d)2^{i-1}+(a-1)rd-2d$ and $\varphi(d)2^{i-1}+(a-1)rd+(rd-d),$ which equals $\varphi(d)2^{i-1}+ard-d.$ However, $a \ge 5$ since $k \ge 3$ if $n < 2^i$ and $k \ge 2$ if $n > 2^i.$ This means $(a-1)rd-2d > 0$ and therefore $\varphi(d)2^{i-1} < \varphi(d)2^{i-1}+(a-1)rd-2d.$ Also, $$ard-d < 3 \cdot 2^{k-1} \cdot 2^{k+4} \cdot 2^4 - d\le 2^{2k+9} -d \le 2^j-d,$$ which means that $c_2n$ when divided by $2^{i+4}$ has remainder between $\varphi(d)\cdot 2^{i-1}$ and $\varphi(d) \cdot 2^{i-1}+2^j-d-1,$ and $\left\lfloor \frac{c_2n}{2^{i-1}}\right\rfloor \equiv \varphi(d) \Mod {32}.$ However, note that $$(\varphi(d) \cdot 2^{i-1}+2^j-d-1)+(a\cdot 2^j+d) \le \varphi(d) \cdot 2^{i-1} + (a+1) \cdot 2^j-1$$ $$< \varphi(d) \cdot 2^{i-1}+3 \cdot 2^{i-1} = (\varphi(d)+3)2^{i-1}.$$  Therefore, $\left\lfloor \frac{(c_2+1)n}{2^{i-1}}\right\rfloor$ is congruent to one of $\varphi(d), \varphi(d)+1,$ or $\varphi(d)+2 \Mod {32}.$ But since $2 \cdot 2^{i-1} < n < 3 \cdot 2^{i-1},$ we must have that $\left\lfloor \frac{(c_2+1)n}{2^{i-1}}\right\rfloor-\left\lfloor \frac{(c_2)n}{2^{i-1}}\right\rfloor$ either equals $2$ or $3$, so $\left\lfloor\frac{(c_2+1)n}{2^{i-1}}\right\rfloor -\left\lfloor\frac{(c_2)n}{2^{i-1}}\right\rfloor = 2.$ Thus, both conditions (1) and (2) are satisfied if $c=c_2$.
            
    Whether or not $c = c_1$ or $c = c_2$ we have $$gr \le c \le gr+(ar)r+(a-1)r-1 \le (g-1)r+(ar+a)r.$$ Therefore, $\left\lfloor \frac{c n}{2^{i-1}}\right\rfloor \ge \left\lfloor \frac{(gr)n}{2^{i-1}}\right\rfloor > p \cdot 2^j$. Now, since $a \le 3 \cdot 2^{k-1}$ and $r \le 2^{k+4},$ 
    \begin{align*}
    \left\lfloor \frac{c n}{2^{i-1}}\right\rfloor &\le \left\lfloor \frac{(g-1)r n + (ar+a)rn}{2^{i-1}}\right\rfloor \\
    &\le p \cdot 2^j + 1 + \frac{(3 \cdot 2^{2k+3}+2^{k+4})(2^{k+4})(3 \cdot 2^{i-1})}{2^{i-1}} \\
    &\le p \cdot 2^j + 1 + 3 \cdot 2^{3k+9} < (p+1) \cdot 2^j,
    \end{align*}
    with the last inequality true since $3 \cdot 2^{3k+9}+1 < 2^{3k+11} \le 2^j.$ This proves that condition (3) holds for $c$.
    
    Finally, we show that condition (3) implies $c\le 2^{i-1}+6$. If $n > 2^i,$ since $(p+1) \cdot 2^j \le 3 \cdot 2^j \le a \cdot 2^j,$ we have that $\frac{(a \cdot 2^j-13)c}{2^{i-1}} < a \cdot 2^j$  and thus $c < \frac{a \cdot 2^j \cdot 2^{i-1}}{a \cdot 2^j-13}.$ As $a \cdot 2^j > 2^i+13,$ we have $\frac{a \cdot 2^j \cdot 2^{i-1}}{a \cdot 2^j-13} < 2^{i-1} \cdot \frac{2^{i}+13}{2^i} < 2^{i-1}+7,$ so $c \le 2^{i-1}+6.$ If $n < 2^i,$ recall that $c_1 \le (ar+g)r$ and $c_2 \le (ar+g)r + (a-1)r < (2ar+g)r.$ So we just have to check that $(2ar+g)r \le 2^i+6.$ However, since $\frac{grn}{2^{i-1}} \le (p+1) \cdot 2^j \le 3 \cdot 2^j$ and $n \ge 2^{i-1},$ we have $gr \le 3 \cdot 2^{j}.$ Also, $2ar^2 \le 2 \cdot 2^k \cdot (2^{k+4})^2 \le 2^{3k+9} \le 2^{j},$ which means that $(2ar+g)r \le 2^{j+2} \le 2^{i-1},$ since we are assuming that $i-3 \ge j.$
\end{proof}

\begin{lemma} \label{2^i+d}
    If $d \in \{1, 3, 5, 11, 13\}$ and $i$ is sufficiently large, then $\mathfrak{K}(2^i+d) \le 2^i + 5$.
\end{lemma}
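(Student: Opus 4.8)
The plan is to use Proposition \ref{DivideBy2^i} to locate, for each admissible $d$, two equal blocks of size $n=2^i+d$ whose indices are both at most $2^i+5$. The engine of the argument is a direct evaluation of the relation $x\equiv_t x+n$ via the binary digit sum. Writing $n=2^i+d$ with $d$ small, I would distinguish two regimes. If $x+d<2^i$, then $x+n=(x+d)+2^i$ merely turns on the bit in position $i$, so $t_{x+n}=\overline{t_{x+d}}$ and hence $x\equiv_t x+n$ holds exactly when $t_x\ne t_{x+d}$. If instead $x=2^i+e$ with $e+d<2^i$, then $t_x=\overline{t_e}$ and $x+n=2^{i+1}+(e+d)$ gives $t_{x+n}=\overline{t_{e+d}}$, so $x\equiv_t x+n$ holds exactly when $t_e=t_{e+d}$. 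Everything then reduces to choosing the block index $c$ so that the window of $x$-values furnished by Proposition \ref{DivideBy2^i} falls where the relevant equalities or inequalities among the early letters of ${\bf t}$ hold.

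For $d\in\{1,5,11,13\}$ I would compare the $(c+1)^{\text{th}}$ and $(c+1+2^i)^{\text{th}}$ blocks. For small $c$ and large $i$ the window $\lfloor cn/2^i\rfloor\le x\le\lfloor((c+1)n-1)/2^i\rfloor$ collapses to $\{c,c+1\}$, so by the first regime it suffices to find $c\le 3$ with $t_c\ne t_{c+d}$ and $t_{c+1}\ne t_{c+1+d}$. Inspecting $t_0t_1t_2\cdots=0110100110010110\cdots$ supplies $c=2$ for $d=1$, $c=3$ for $d=5$, $c=0$ for $d=11$, and $c=2$ for $d=13$; in each case the later block has index $c+1+2^i\le 2^i+4$, so $\mathfrak{K}(2^i+d)\le 2^i+4$.

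The case $d=3$ is the main obstacle, and I expect it to be exactly what forces the ``$+5$'' in the statement. A finite check shows no $c\le 4$ satisfies both $t_c\ne t_{c+3}$ and $t_{c+1}\ne t_{c+4}$, so the $2^i$-comparison cannot seat both blocks at index $\le 2^i+5$. Instead I would compare the $(c+1)^{\text{th}}$ and $(c+1+2^{i-1})^{\text{th}}$ blocks, applying Proposition \ref{DivideBy2^i} with $i-1$ in place of $i$. Because $n/2^{i-1}\approx 2$, the resulting window consists of three consecutive integers; but since we may take $c$ as large as $2^{i-1}+4$ while still keeping the later index $c+1+2^{i-1}\le 2^i+5$, we can slide this window above $2^i$ and invoke the second regime. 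Taking $c=2^{i-1}+4$, a floor computation gives $\lfloor cn/2^{i-1}\rfloor=2^i+11$ and $\lfloor((c+1)n-1)/2^{i-1}\rfloor=2^i+13$ for all large $i$, so the window is $\{2^i+11,2^i+12,2^i+13\}$. The second regime reduces the three required relations to $t_{11}=t_{14}$, $t_{12}=t_{15}$, and $t_{13}=t_{16}$, all of which hold, so the blocks of index $2^{i-1}+5$ and $2^i+5$ coincide and $\mathfrak{K}(2^i+3)\le 2^i+5$.

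The routine parts are the two digit-sum evaluations, the finite table lookups, and checking that the floor computations yield the stated windows once $i$ is sufficiently large. The one genuinely delicate point is the $d=3$ construction, where the search for small $c$ provably fails and one is pushed to the $2^{i-1}$-comparison with $c$ sitting at the very boundary $2^{i-1}+4$; this is precisely why this method cannot lower the bound below $2^i+5$.
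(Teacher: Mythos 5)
Your proposal is correct and follows essentially the same route as the paper: for $d\in\{1,5,11,13\}$ it applies Proposition \ref{DivideBy2^i} with shift $2^i$ and a small block index (your choices of $c$ agree with the paper's $\psi(d)$ except for $d=13$, where both $c=2$ and $c=3$ work), and for $d=3$ it compares the very same two blocks, indexed $2^{i-1}+5$ and $2^i+5$, via the same window $\{2^i+11,2^i+12,2^i+13\}$ and the same checks $11\equiv_t 14$, $12\equiv_t 15$, $13\equiv_t 16$. The only cosmetic difference is that you phrase the $d=3$ case as Proposition \ref{DivideBy2^i} at level $i-1$ with $c=2^{i-1}+4$, while the paper keeps $c=4$ and shifts both block indices; the content is identical.
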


\begin{proof}
    First, suppose that $d \in \{1, 5, 11, 13\}$. If $\left\lfloor\frac{c(2^i+d)}{2^i}\right\rfloor, \dots, \left\lfloor\frac{(c+1)(2^i+d)}{2^i}\right\rfloor$ are equivalent, respectively, to $\left\lfloor\frac{c(2^i+d)}{2^i}\right\rfloor + (2^i+d), \dots, \left\lfloor\frac{(c+1)(2^i+d)}{2^i}\right\rfloor + (2^i+d),$ then the $(c+1)^\text{th}$ and $(c+1+2^i)^\text{th}$ blocks of ${\bf t}$ are identical. Observe also that for $c \le 4$ and sufficiently large $i$, the sequences $\left\lfloor\frac{c(2^i+d)}{2^i}\right\rfloor, \dots, \left\lfloor\frac{(c+1)(2^i+d)}{2^i}\right\rfloor$ and $c, c+1$ are identical.
    
    Let $$\psi(d) = 
    \begin{cases} 
    2, & \text{if } d = 1; \\
    3, & \text{if } d = 5; \\
    0, & \text{if } d = 11; \\
    3, & \text{if } d = 13.
   \end{cases} $$
   
   For all $d \in \{1, 5, 11, 13\}$, it is straightforward to verify that $\psi(d) \equiv_t 2^i + d + \psi(d)$ and $\psi(d) + 1 \equiv_t 2^i + d + \psi(d) + 1$. Thus, by setting $c=\psi(d)$, we are done in the case $d\in\{1,5,11,13\}$.
   
   Next, assume $d = 3$. Note that if $\left\lfloor\frac{c(2^i+3)}{2^{i-1}}\right\rfloor + (2^i+3), \dots, \left\lfloor\frac{(c+1)(2^i+3)}{2^{i-1}}\right\rfloor + (2^i+3)$ are equivalent respectively to $\left\lfloor\frac{c(2^i+3)}{2^{i-1}}\right\rfloor + (2^{i+1}+6), \dots, \left\lfloor\frac{(c+1)(2^i+3)}{2^{i-1}}\right\rfloor + (2^{i+1}+6)$, then the $(c+1+2^{i-1})^\text{th}$ and $(c+1+2^i)^\text{th}$ blocks are the same. Choose $c = 4$. The sequences $\left\lfloor\frac{c(2^i+3)}{2^{i-1}}\right\rfloor + (2^i+3), \dots, \left\lfloor\frac{(c+1)(2^i+3)}{2^{i-1}}\right\rfloor + (2^i+3)$ and $2^i + 11, 2^i + 12, 2^i + 13$ are identical. Similarly, the sequences $\left\lfloor\frac{c(2^i+3)}{2^{i-1}}\right\rfloor + (2^{i+1}+6), \dots, \left\lfloor\frac{(c+1)(2^i+3)}{2^{i-1}}\right\rfloor + (2^{i+1}+6)$ and $2^{i+1}+14, 2^{i+1}+15, 2^{i+1}+16$ are identical. Since $11 \equiv_t 14$, $12 \equiv_t 15$, and $13 \equiv_t 16$, we have that $2^i+11 \equiv_t 2^{i+1}+14$, $2^i+12 \equiv_t 2^{i+1}+15$, and $2^i+13 \equiv_t 2^{i+1}+16$.
   
   For each $d$, we were able to choose an appropriate $c\leq 4$ such that $(c+1)^\text{th}$ and $(c+1+2^i)^\text{th}$ blocks of $\textbf{t}$ are identical. It follows that $\mathfrak{K}(2^i+d)\le 2^i+5$.
\end{proof}

We use Lemmas \ref{8x+1}, \ref{32x+d}, \ref{CloseToHighV2}, and \ref{2^i+d} to provide a strong upper bound on $\mathfrak{K}(n)$ for all odd $n$, which will help improve bounds for both $\gamma(k)$ and $\Gamma(k)-\gamma(k)$.

\begin{theorem} \label{limsupK}
    For odd $n$, we have $\limsup\limits_{n \to \infty} \frac{\mathfrak{K}(n)}{n} \le \frac{4}{3}.$
\end{theorem}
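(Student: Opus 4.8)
The plan is to show that for every sufficiently large odd $n$ some pair of equal length‑$n$ blocks occurs early enough to force $\mathfrak{K}(n)\le(\tfrac43+o(1))n$, and then take $\limsup$. Put $i=\lfloor\log_2 n\rfloor$, so $2^i<n<2^{i+1}$. The argument rests on the decomposition that feeds the four lemmas: every odd $n$ can be written $n=a\cdot 2^j+d$ with $a$ odd, $d\in\{\pm1,\pm3,\pm5,\pm11,\pm13\}$, and $j$ the exponent of $2$ in $n-d$, where $d$ is read off from $n\bmod 8$ (forcing $d=\pm1$, $j\ge3$) when $n\equiv\pm1\pmod 8$ and from $n\bmod 32$ (forcing $j\ge5$) when $n\equiv 3,5\pmod 8$. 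One checks that these ten residues exhaust the odd classes mod $8$ (resp.\ mod $32$), so the decomposition always exists and matches the hypothesis format of Lemmas~\ref{8x+1}, \ref{32x+d}, \ref{CloseToHighV2}, \ref{2^i+d}.

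Next I would dispose of the \emph{main regime} $2^i<n<3\cdot 2^{i-1}$ by applying these lemmas according to $k:=i-j$. If $k=0$ then $n=2^i+d$ and Lemma~\ref{2^i+d} gives $\mathfrak{K}(n)\le 2^i+5$; if $j$ is large (in particular $k=2$, using $j\ge(3i+11)/4$, which holds once $i$ is large) Lemma~\ref{CloseToHighV2} gives $\mathfrak{K}(n)\le 2^i+7$; and if $j$ is small ($k\ge3$) Lemmas~\ref{8x+1}/\ref{32x+d} give $\mathfrak{K}(n)\le 2^i+3\cdot 2^j+O(1)$. In this last case, writing $n=a\cdot 2^{i-k}+d$ with $a$ odd and $a>2^k$, one gets $\mathfrak{K}(n)/n\le(2^k+3)/a+o(1)\le(2^k+3)/(2^k+1)+o(1)\le 11/9<4/3$, the worst value occurring at $k=3,\ a=9$; the cases $k\in\{0,2\}$ give ratio $\to1$. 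Thus every $n$ in this regime is comfortably below $4/3$, with the sole exception of the tail $k=1$, i.e.\ $n=3\cdot 2^{i-1}+d$ with $d<0$.

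The crux is the remaining \emph{high regime} $n\ge 3\cdot 2^{i-1}-13$ (the $k=1$ tail together with all $n\in[3\cdot 2^{i-1},2^{i+1})$), where $n/2^{i+1}\in[\tfrac34-o(1),1)$ and the offset‑$2^i$ analysis of the lemmas breaks because $2n/2^i\ge 3$. Here I would invoke Proposition~\ref{DivideBy2^i} with offset $2^{i+1}$: the $(c+1)^{\text{th}}$ and $(c+1+2^{i+1})^{\text{th}}$ blocks agree iff $x\equiv_t x+n$ for the one or two integers $x\in[\lfloor cn/2^{i+1}\rfloor,\lfloor((c+1)n-1)/2^{i+1}\rfloor]$, an interval of length $(n-1)/2^{i+1}<1$. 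Using $t_{2y}=t_y$ and $t_{2y+1}=\overline{t_y}$, if the left endpoint is even, $x=2y$, then both $2y$ and $2y+1$ satisfy $x\equiv_t x+n$ exactly when $\mathbf{t}$ has an equal consecutive pair $t_m=t_{m+1}$ with $m=y+(n-1)/2$ whose common value differs from $t_y$. Choosing $c$ with $\lfloor cn/2^{i+1}\rfloor=2y$ costs $c\le 2y\cdot 2^{i+1}/n+O(1)=O(y)$, whence $\mathfrak{K}(n)\le 2^{i+1}+O(y)$ and the ratio is at most $2^{i+1}/n+O(y/2^i)\le\tfrac43+o(1)$ provided $y=o(2^i)$.

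The main obstacle is exactly this last step: producing such a $y$ with $y=o(2^i)$ (I expect in fact $y=O(1)$) uniformly in $n$. The favorable structural fact is that equal consecutive pairs of $\mathbf{t}$ are syndetic — since $ababa$ is never a factor, some pair $t_m=t_{m+1}$ occurs within every window of length $4$ — so a candidate $m=y+(n-1)/2$ with the equal‑pair property appears for small $y$; the remaining work is to rule out a long initial stretch in which every such pair has common value equal to $t_y$. I would handle this via the recursion $t_m=t_{m+1}\iff m\text{ odd and }t_{(m-1)/2}\ne t_{(m+1)/2}$ together with the decorrelation of $t_y$ and $t_{y+(n-1)/2}$ as $y$ grows, equivalently by adapting the $\varphi$‑type constructions of Lemmas~\ref{32x+d} and \ref{CloseToHighV2} to offset $2^{i+1}$. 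This high regime is also where the inequality is tight: as $n\downarrow 3\cdot 2^{i-1}$ the ratio $2^{i+1}/n\uparrow\tfrac43$, so no constant smaller than $4/3$ survives.
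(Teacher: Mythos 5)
Your handling of the range $2^i < n < 3\cdot 2^{i-1}-13$ is sound, and in fact more explicit than the paper's one-line appeal to Lemmas \ref{8x+1}--\ref{2^i+d}: the check that the residues $\pm 1 \bmod 8$ together with $\pm 3,\pm 5,\pm 11,\pm 13 \bmod 32$ exhaust the odd integers, the case split on $k=i-j$ (Lemma \ref{2^i+d} for $k=0$, Lemma \ref{CloseToHighV2} for $k=2$, Lemmas \ref{8x+1}/\ref{32x+d} for $k\ge 3$ with worst ratio $(2^k+3)/(2^k+1)=11/9$ at $k=3$), and the identification of $k=1$, i.e.\ $n=3\cdot 2^{i-1}-|d|$, as the values the lemmas cannot reach are all correct.

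The problem is the remaining regime $3\cdot 2^{i-1}-13\le n<2^{i+1}$, which is not a side case but the entire content of the theorem: it is exactly where the ratio climbs to $4/3$, and since infinitely many odd $n$ lie in these intervals, no bound on the $\limsup$ follows without it. Your reduction via Proposition \ref{DivideBy2^i} with offset $2^{i+1}$ --- that it suffices to find a small $y$ with $t_{y+(n-1)/2}=t_{y+(n+1)/2}\neq t_y$ --- is correct, but you then explicitly leave the existence of such a $y$ (with $y=o(2^i)$, uniformly in $n$) as ``remaining work,'' offering only a heuristic about decorrelation in place of a proof; ruling out a long initial stretch where every equal pair $t_m=t_{m+1}$ has common value $t_y$ is precisely the hard part, and it is not done. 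For comparison, the paper does not prove this regime from scratch either: the asserted bound $\mathfrak{K}(n)\le 2^{i+1}(1+o(1))$ is an instance of the estimate $\mathfrak{K}(n)<2^{i+1}+2^{(i+6)/2}+10$ for $n<2^{i+1}$ from Lemma 17 of \cite{Defant}, which this paper invokes explicitly in the proof of Theorem \ref{limsupgamma}. Either complete your construction of $y$ or cite that lemma; as written, the proposal does not establish the theorem.
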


\begin{proof}
    If $2^i < n < 3 \cdot 2^{i-1} - 13$, then $\mathfrak{K}(n) \le 2^i \cdot (1 + o(1)) < n \cdot (1+o(1))$, where the inequality $\mathfrak{K}(n) \le 2^i \cdot (1 + o(1)) \le n \cdot (1+o(1))$ follows directly from Lemmas \ref{8x+1} through \ref{2^i+d}. If $3 \cdot 2^{i-1} - 13 \le n < 2^{i+1}$, then \cite[Lemma 14]{Defant} implies $\mathfrak{K}(n) \le 2^{i+1} \cdot (1 + o(1)) \le n \cdot \frac{4}{3} \cdot (1+o(1))$. We note, however, that there is an error in \cite[Lemma 14]{Defant}, which we fix in \ref{FixColin}.
\end{proof}

For $n = 2^i+1, 2^i+3,$ and $2^{2i}-3$, we determine exact values of $\mathfrak{K}(n)$. These values will be used to compute bounds for $\Gamma(k)-\gamma(k)$ for specific values of $k$, which will prove useful in understanding their asymptotic bounds.

\begin{lemma} \label{2^i+1 and 2^i+3}
    We have that $\mathfrak{K}(2^i+1) = 2^{i-1}+2$ and $\mathfrak{K}(2^i+3) = 2^i+5$ for all sufficiently large $i$.
\end{lemma}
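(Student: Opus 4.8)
The plan is to prove matching upper and lower bounds for each value, throughout using the binary digit-sum description of $\mathbf{t}$ together with the self-similarity relations $t_{2m}=t_m$ and $t_{2m+1}=\overline{t_m}$. For $\mathfrak{K}(2^i+1)$ I would obtain the lower bound from Proposition \ref{ConvOfPrev}: since $3\cdot 2^{(i-1)-1}<2^i+1$, any two equal blocks of size $2^i+1$ have indices congruent modulo $2^{i-1}$, so the only candidate collision among the first $2^{i-1}+1$ blocks is between blocks $1$ and $1+2^{i-1}$. I would rule this out with Proposition \ref{DivideBy2^i} (taking the exponent to be $i-1$): the relevant window is $x\in\{0,1,2\}$, and because $t_{x+2^i+1}=t_{2^i+(x+1)}\equiv t_{x+1}+1 \bmod 2$ for these $x$, the condition $x\equiv_t x+(2^i+1)$ becomes $t_x\neq t_{x+1}$, which fails at $x=1$ (as $t_1=t_2=1$). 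Hence $\mathfrak{K}(2^i+1)\ge 2^{i-1}+2$. For the matching upper bound I would exhibit an actual collision: applying Proposition \ref{DivideBy2^i} to blocks $2$ and $2+2^{i-1}$ gives the window $x\in\{2,3,4\}$, where $t_x\neq t_{x+1}$ holds for all three values, so these blocks coincide and $\mathfrak{K}(2^i+1)\le 2^{i-1}+2$.

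For $\mathfrak{K}(2^i+3)$ the upper bound $\mathfrak{K}(2^i+3)\le 2^i+5$ is exactly the $d=3$ case of Lemma \ref{2^i+d}, so the work lies entirely in the lower bound, namely that the first $2^i+4$ blocks are pairwise distinct. By Proposition \ref{ConvOfPrev} (again with exponent $i-1$, valid since $3\cdot 2^{i-2}<2^i+3$) any collision among these blocks has index-difference a positive multiple of $2^{i-1}$, and since $3\cdot 2^{i-1}>2^i+3$ the only possibilities are difference $2^{i-1}$ or $2^i$. The difference-$2^i$ case involves only the four pairs $(c,c+2^i)$ with $c\in\{1,2,3,4\}$; for each, Proposition \ref{DivideBy2^i} yields a two-element window $\{c-1,c\}$ and reduces the collision condition to $t_x\neq t_{x+3}$, which I would check fails in every case (for instance $t_0=t_3$, $t_1=t_4$, and $t_3=t_6$).

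The heart of the argument is the difference-$2^{i-1}$ case. For blocks $(c+1)$ and $(c+1+2^{i-1})$, Proposition \ref{DivideBy2^i} produces a window of three consecutive integers (generically $\{2c,2c+1,2c+2\}$), and a collision requires $x\equiv_t x+(2^i+3)$ throughout. In the bulk regime, where the window lies below $2^i$, this reduces to $t_x\neq t_{x+3}$ on three consecutive $x$; rewriting each condition via $t_{2m}=t_m$ and $t_{2m+1}=\overline{t_m}$ expresses it in terms of equalities among $t_\ell,t_{\ell+1},t_{\ell+2}$, and the three conditions together force $t_\ell=t_{\ell+1}=t_{\ell+2}$, which is impossible since $\mathbf{t}$ contains no $3$-power $\ell\ell\ell$ \cite{Thue}. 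This eliminates every candidate collision with $c\le 2^{i-1}-4$. The remaining $O(1)$ values $c\in\{2^{i-1}-3,\dots,2^{i-1}+3\}$, whose windows straddle or just exceed $2^i$, I would treat by hand: for windows lying above $2^i$ the condition instead becomes $t_m=t_{m+3}$ on three consecutive $m$, which I would verify fails for each such $c$ by direct computation (the smallest $m$ admitting three consecutive solutions is $m=11$, corresponding precisely to the collision of blocks $2^{i-1}+5$ and $2^i+5$ underlying Lemma \ref{2^i+d}); and for the two windows that actually contain $2^i$ I would use the exact values of $t_{2^i-3},t_{2^i-2},t_{2^i-1}$, whose binary digit-sums have opposite parities, to show that two adjacent boundary positions cannot simultaneously be equivalent to their $n$-shifts.

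I expect this boundary region near $x=2^i$ to be the main obstacle. It is exactly where the uniform ``no three equal consecutive letters'' reduction breaks down, since the governing condition flips from $t_x\neq t_{x+3}$ to $t_x=t_{x+3}$, and it is where the genuine collision producing the value $2^i+5$ resides. The argument must therefore be delicate enough to admit that collision at index $2^i+5$ while excluding every collision of smaller index, which is what pins down the exact value $\mathfrak{K}(2^i+3)=2^i+5$.
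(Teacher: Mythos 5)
Your argument is correct. For $\mathfrak{K}(2^i+1)$ it is essentially the paper's proof: the same collision between blocks $2$ and $2+2^{i-1}$ gives the upper bound, and the pair $(1,1+2^{i-1})$ is the only other candidate; the paper excludes it by comparing one explicit letter ($t_{2^{i-1}}=1$ against a letter of binary digit sum $2$) rather than through the window condition $t_x\neq t_{x+1}$, which is a cosmetic difference. For $\mathfrak{K}(2^i+3)$ your skeleton matches the paper's --- Proposition \ref{ConvOfPrev} confines collisions among the first $2^i+4$ blocks to index-differences $2^{i-1}$ and $2^i$, and both proofs terminate in the same finite search locating the first true collision at blocks $2^{i-1}+5$ and $2^i+5$ --- but you handle the bulk of the lower bound differently. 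The paper quotes $\mathfrak{K}(2^i+3)\ge 2^i-2$ from Lemma 19 of \cite{Defant}, leaving only the pairs $(2^{i-1}-d,\,2^i-d)$ with $d\in\{0,1,2\}$ and the pairs whose larger index is $2^i+c$ with $c\le 4$ to be killed by direct digit-sum computations. You instead eliminate every difference-$2^{i-1}$ pair whose window sits below $2^i$ by reducing to the impossibility of $t_x\neq t_{x+3}$ for three consecutive $x$, which under $t_{2m}=t_m$ and $t_{2m+1}=\overline{t_m}$ collapses to the forbidden factor $\ell\ell\ell$; this is precisely the device the paper itself uses in the middle range of Lemma \ref{2^{2i}-3}. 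The payoff is a self-contained lemma that does not lean on the external result; the cost is a handful of additional boundary windows near $x=2^i$ (those with $c$ within $O(1)$ of $2^{i-1}$, where the governing condition flips to $t_m=t_{m+3}$ or mixes the two forms) that must be checked by hand. I verified the checks you defer: the three conditions do force $t_\ell=t_{\ell+1}=t_{\ell+2}$ for either parity of $x$; the difference-$2^i$ windows $\{c-1,c\}$ for $1\le c\le 4$ all fail via $t_0=t_3$, $t_1=t_4$, $t_3=t_6$; and the windows at or above $2^i$ fail for every admissible index, the first run of three consecutive solutions of $t_m=t_{m+3}$ occurring at $m=11$, which is exactly the collision exhibited in Lemma \ref{2^i+d}.
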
 

\begin{proof}
    To show $\mathfrak{K}(2^i+1) \le 2^{i-1}+2,$ note that if we set $c = 1,$ then $\left\lfloor \frac{c(2^i+1)}{2^{i-1}}\right\rfloor = 2$ and $\left\lfloor \frac{(c+1)(2^i+1)}{2^{i-1}}\right\rfloor = 4$. But for $i \ge 4$ and $x=2,3,4$, we have $x \equiv_t 2^{i-1}+x+1$. This means that $\mathfrak{K}(2^i+1) \le 2^{i-1}+c+1 = 2^{i-1}+2$ for $i \ge 4$ by Proposition \ref{DivideBy2^i}. 
    
    However, we know if the $j^\text{th}$ and $(j')^\text{th}$ blocks of length $2^i+1$ in ${\bf t}$ match, then $j \equiv j' \Mod {2^{i-1}}$. Hence, if $\mathfrak{K}(2^i+1) < 2^{i+1} + 2$, then the $1^\text{st}$ and $(2^i+1)^\text{th}$ blocks match. But the $(2^{i-1}+1)^\text{th}$ letter in the $(2^{i+1}+1)\text{th}$ block is $t_{2^{i-1}(2^i+1)+2^{i-1}} = 0$ while the $(2^{i-1}+1)^\text{th}$ letter in the first block is $t_{2^{i-1}} = 1$. Therefore, $\mathfrak{K}(2^i-1) = 2^{i-1}+2$.
    
    Note that $\mathfrak{K}(2^i+3) \ge 2^{i}-2$ as a consequence of \cite[Lemma 19]{Defant}. Suppose that $\mathfrak{K}(2^i+3) \le 2^i$. By Proposition \ref{ConvOfPrev}, there exists $d\in\{0,1,2\}$ such that the $(2^{i-1}-d)^\text{th}$ and $(2^{i}-d)^\text{th}$ blocks of ${\bf t}$ are the same. For these blocks to be the same, the sequences $\left\lfloor \frac{(2^i+3)(2^{i-1}-d-1)}{2^{i-1}}\right\rfloor, \dots, \left\lfloor \frac{(2^i+3)(2^{i-1}-d)-1}{2^{i-1}}\right\rfloor$ and $\left\lfloor \frac{(2^i+3)(2^{i}-d-1)}{2^{i-1}}\right\rfloor, \dots, \left\lfloor \frac{(2^i+3)(2^{i}-d)-1}{2^{i-1}}\right\rfloor$ must have the same respective binary digit sums modulo $2$. These sequences equal
    \begin{align*}
        2^i-4, 2^i-3, 2^i - 2, & \hspace{.2cm}\text{and} \hspace{0.2cm} 2^{i+1}-1, 2^{i+1},2^{i+1}+1, & \text{if}\hspace{0.2cm} d = 2; \\
        2^i-2, 2^i-1, 2^i, & \hspace{.2cm}\text{and} \hspace{0.2cm} 2^{i+1}+1, 2^{i+1}+2,2^{i+1}+3, & \text{if}\hspace{0.2cm} d = 1; \\
        2^i,2^i+1, 2^i+2, 2^i+3, & \hspace{.2cm}\text{and} \hspace{0.2cm} 2^{i+1}+2, 2^{i+1}+3,2^{i+1}+4,2^{i+1}+5, & \text{if}\hspace{0.2cm} d = 0.
    \end{align*} 
    Since $2^i-4$ has binary digit sum $i-2$ and $2^{i+1}-1$ has binary digit sum $i+1$, we have $d \neq 2$. We also know $d \neq 1$ since $2^i-2$ and $2^i-1$ have binary digit sums $i-1$ and $i$, respectively, while $2^{i+1}+1$ and $2^{i+1}+2$ both have binary digit sum $2$. Finally, $d \neq 0$ since $2^i$ has binary digit sum $1$ and $2^{i+1}+2$ has binary digit sum $2$.
    
    We may now assume $\mathfrak{K}(2^i+3) = 2^i + c$ for some $c\geq 1$. This is only possible if the $(2^i+c)^\text{th}$ block in ${\bf t}$ is the same as either the $c^\text{th}$ block or the $(2^{i-1}+c)^\text{th}$ block. If $c \ll i$ then the $c^\text{th}$ block of length $2^i+3$ of positive integers, when divided by $2^{i-1}$ term-by-term, gives the sequence $2c-2, 2c-1, 2c$, and the $(c+2^{i-1})^\text{th}$ and $(c+2^i)^\text{th}$ blocks of positive integers give $2^i+2c+1, 2^i+2c+2, 2^i+2c+3$ and $2^{i+1}+2c+4, 2^{i+1}+2c+5, 2^{i+1}+2c+6.$ Since $c \ll i$, adding a $2^{i+1}$ or $2^{i}$ term simply changes the letter $t_{2c+x}$ (from $1$ to $0$ or vice verse) for $1 \le x \le 6$. In other words, $c$ is the smallest positive integer such that at least two of $$t_{2c-2}t_{2c-1}t_{2c}, \overline{t_{2c+1}t_{2c+2}t_{2c+3}}, \overline{t_{2c+4}t_{2c+5}t_{2c+6}}$$ match. Just looking at $t_n$ for $0 \le n \le 16$ tells us that the first $c$ for which we have a match is $c = 5.$ Therefore,  $\mathfrak{K}(2^i+3) = 2^i + 5.$
\end{proof}

\begin{remark}
    Note that we already knew $\mathfrak{K}(2^i+3) \le 2^i+5$ for sufficiently large $i$ from Lemma \ref{2^i+d}; now we know equality holds.
\end{remark}

\begin{lemma} \label{2^{2i}-3}
    We have $\mathfrak{K}(2^{2i}-3) = 2^{2i}+10$.
\end{lemma}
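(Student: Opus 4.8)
The plan is to set $q = 2i$ (so $n = 2^q - 3$ with $q$ \emph{even}) and to locate the first pair of equal length-$n$ blocks. Since $3\cdot 2^{q-2} < n < 3\cdot 2^{q-1}$, Proposition \ref{ConvOfPrev} (applied with $2^{q-1}$) forces any two equal blocks to have indices congruent modulo $2^{q-1}$. Thus $\mathfrak{K}(n)$ equals the smallest $b$ for which block $b$ coincides with an earlier block $b - m\cdot 2^{q-1}$; the relevant candidate gaps are $2^{q-1}$ (giving $b = a + 2^{q-1}$) and $2^q$ (giving $b = a + 2^q$), everything larger producing a bigger index. For each candidate I will use Proposition \ref{DivideBy2^i} to rewrite ``block $c+1$ equals block $c+1+2^{q-1}$'' (resp.\ $\dots+2^{q}$) as the requirement that $x \equiv_t x+n$ for every $x$ in a short interval of length $2$ or $3$ around $2c$ (resp.\ around $c$).

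The heart of the argument is to evaluate $x \equiv_t x+n$ explicitly. Writing $x+n = 2^q + (x-3)$, the binary digit-sum description of $\mathbf t$ gives: for $3 \le x \le 2^q+2$ the condition $x \equiv_t x+n$ is equivalent to $t_x \ne t_{x-3}$; while for $x = 2^q + y$ with $y \ge 3$ a double carry flips it into $t_y = t_{y-3}$, since there $t_x = \overline{t_y}$ and $t_{x+n} = \overline{t_{y-3}}$ simultaneously. The three boundary indices $y \in \{0,1,2\}$ are computed directly, and here the parity of $q$ enters: because $q = 2i$ is even, $x \equiv_t x+n$ holds for $y=1$ but fails for $y=0$ and $y=2$. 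This is exactly where evenness of the exponent is used and why the answer for $2^{2i}-3$ takes its particular value.

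Introduce $d_x := t_x \oplus t_{x-3}$, so the pre-wrap block condition asks for a run of $d_x = 1$ and the post-wrap condition for a run of $d_y = 0$. I will prove the key fact that $d_x$ and $d_{x+1}$ are never both $1$: telescoping gives $d_x = u_{x-1}\oplus u_{x-2}\oplus u_{x-3}$ where $u_x = t_x\oplus t_{x+1}$, and since $u_{2k} = 1$ for every $k$ (so $u$ has no two consecutive $0$'s), a short parity check on the two even indices among $x-3,\dots,x$ rules out $d_x = d_{x+1} = 1$. Consequently no collision can occur while the governing interval lies in the pre-wrap range $x \le 2^q+2$, as that would demand two (indeed three) consecutive indices with $d=1$. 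This disposes of all gap-$2^{q-1}$ candidates with $c < 2^{q-1}$ and all gap-$2^q$ candidates with $a \le 9$ (the finitely many cases with $x < 3$ checked by hand), so no equal pair of blocks with larger index below $2^q+10$ arises from those ranges.

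It remains to examine the window $c = 2^{q-1}, 2^{q-1}+1, \dots, 2^{q-1}+9$, where the interval of $x$-values crosses $2^q$. Using $\lfloor cn/2^{q-1}\rfloor = 2c - \lceil 3c/2^{q-1}\rceil$ one computes these intervals explicitly (each of length $2$ or $3$). The case $c = 2^{q-1}$ still lies in the pre-wrap range and fails by the no-consecutive fact; for $2^{q-1}+1 \le c \le 2^{q-1}+3$ the interval contains a failing boundary index ($y=0$ or $y=2$); and for $2^{q-1}+4 \le c \le 2^{q-1}+8$ the post-wrap window $\{y,y+1,y+2\}$ contains an index with $d=1$ (one of $d_5,d_8,d_{10},d_{13}$). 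Hence all of these fail. At $c = 2^{q-1}+9$ the window is $y \in \{14,15,16\}$ with $d_{14}=d_{15}=d_{16}=0$, so block $2^{q-1}+10$ equals block $2^q+10$. Combining the failure of every candidate below this with this single success yields $\mathfrak{K}(2^{2i}-3) = 2^q+10 = 2^{2i}+10$. I expect the main obstacle to be the bookkeeping in the boundary region near $x = 2^q$: pinning down the interval endpoints and the three exceptional indices $y\in\{0,1,2\}$ exactly, since these (together with the parity of $q$) are precisely what force the first collision to land at $2^{2i}+10$ rather than nearby.
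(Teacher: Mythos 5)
Your proposal follows essentially the same route as the paper's proof: reduce via Propositions \ref{ConvOfPrev} and \ref{DivideBy2^i} to the two candidate gaps $2^{q-1}$ and $2^{q}$, translate each candidate into the condition $x \equiv_t x+n$ on a short window of consecutive integers, observe that for $3 \le x \le 2^q+2$ this condition reads $t_x \neq t_{x-3}$ while after the wrap it reads $t_y = t_{y-3}$, and then check the finitely many windows near the boundary. Your window endpoints, the boundary values at $y \in \{0,1,2\}$ (where the evenness of the exponent enters), and the failing indices $d_5, d_8, d_{10}, d_{13}$ all agree with the paper's explicit table, and you land on the same first match, blocks $2^{q-1}+10$ and $2^q+10$. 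The one place where your argument as written does not close is the key fact that $d_x$ and $d_{x+1}$ are never both $1$: the property you invoke, $u_{2k}=1$ for all $k$ (equivalently, no two consecutive $0$'s in $u$), is not sufficient --- the purely alternating word $0101\cdots$ satisfies it and has $d_x = 1$ for every $x$. Your parity check on the two even indices correctly forces $u_{x-3}=u_{x-2}=u_{x-1}=u_x=1$, i.e.\ five consecutive alternating letters $t_{x-3}\cdots t_{x+1}$, but to get a contradiction from this you must additionally invoke the overlap-freeness of $\mathbf{t}$ (no factor of the form $ababa$), which is precisely what the paper does at the corresponding step by reducing to the forbidden factors $01010$, $10101$, $000$, $111$. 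With that citation added (it is already used in the proof of Proposition \ref{ConvOfPrev}), your proof is complete; the $d_x$ bookkeeping is a slightly cleaner repackaging of the paper's case check rather than a genuinely different method.
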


\begin{proof}
    Let $n = 2^{2i}-3$. Because $3 \cdot 2^{2i-2} < 2^{2i}-3$ for $i \ge 2,$ if the $j^\text{th}$ and $(j')^\text{th}$ blocks of ${\bf t}$ match, then $j \equiv j' \Mod {2^{2i-1}}$. First, assume $\mathfrak{K}(n) \le 2^{2i-1}+2.$ Then either blocks $1$ and $2^{2i-1}+1$ or blocks $2$ and $2^{2i-1}+2$ match. However, this means either $x \equiv_t x+n$ for all $x$ such that $0 = \left\lfloor \frac{0}{2^{2i-1}}\right\rfloor \le x \le \left\lfloor \frac{n-1}{2^{2i-1}}\right\rfloor = 1$ or $x \equiv_t x+n$ for all $x$ such that $1 = \left\lfloor \frac{n}{2^{2i-1}} \right\rfloor \le x \le \frac{2n-1}{2^{2i-1}} = 3$. But $t_1 = 1$ and $t_{2^{2i-1}-2} = 0$, so $\mathfrak{K}(n) \ge 2^{2i-1}+3.$
    
    Next, suppose that $\mathfrak{K}(n) \le 2^{2i}$. Then there exists some $2 \le c \le 2^{2i-1}-1$ such that $x \equiv_t x+n$ for all $\left\lfloor \frac{cn}{2^{2i-1}} \right\rfloor \le x \le \left\lfloor \frac{(c+1)n-1}{2^{2i-1}} \right\rfloor$. However, any such $x$ must satisfy $3 \le x < 2^{2i-1}.$ Therefore, whenever $\left\lfloor \frac{cn}{2^{2i-1}} \right\rfloor \le x \le \left\lfloor \frac{(c+1)n-1}{2^{2i-1}} \right\rfloor$, we have $x \not\equiv_t x-3$. But since $\left\lfloor \frac{cn}{2^{2i-1}} \right\rfloor \le \left\lfloor \frac{(c+1)n-1}{2^{2i-1}} \right\rfloor-1$, there exists $x$ such that $t_x \neq t_{x-3}, t_{x+1} \neq t_{x-2}.$ Since either $x$ or $x+3$ is odd, we must have $t_x \neq t_{x+1}$ and $t_{x-3} \neq t_{x-2},$ so the only possible sequences for $t_{x-3}t_{x-2}t_{x-1}t_xt_{x+1}$ are $01010, 01110, 10101, 10001$. We know none of $01010, 10101, 111,$ and $000$ can appear as contiguous subwords of ${\bf t}$ because $\textbf{t}$ is overlap-free. This yields a contradiction, so $\mathfrak{K}(n) > 2^{2i}.$
    
    Now, assume $\mathfrak{K}(n) = 2^{2i}+c$ for $c < 2^{2i-1}$. This means that either $x \equiv_t x+2n$ for all $x$ such that $\left\lfloor \frac{(c-1)n}{2^{2i-1}}\right\rfloor \le x \le \left\lfloor \frac{cn-1}{2^{2i-1}}\right\rfloor$ or $x+n \equiv_t x+2n$ for all such $x$. Writing out the possible values of $x, x+n, x+2n$ for $1 \le c \le 10$ gives us the following table:
    
\begin{table}[ht]
\begin{tabular}{|c||l|l|l|}
    \hline
    $c$ & $x$ & $x+n$ & $x+2n$ \\ \hline
    $1$ & $0, 1$ & $2^{2i}-3, 2^{2i}-2$ & $2^{2i+1}-6, 2^{2i+1}-5$ \\[0.3ex] \hline
    $2$ & $1, 2, 3$ & $2^{2i}-2, 2^{2i}-1, 2^{2i}$ & $2^{2i+1}-5, 2^{2i+1}-4, 2^{2i+1}-3$\\ \hline
    $3$ & $3, 4, 5$ & $2^{2i}, 2^{2i}+1, 2^{2i}+2$ & $2^{2i+1}-3, 2^{2i+1}-2, 2^{2i+1}-1$\\ \hline
    $4$ & $5, 6, 7$ & $2^{2i}+2, 2^{2i}+3, 2^{2i}+4$ & $2^{2i+1}-1, 2^{2i+1}, 2^{2i+1}+1$ \\ \hline
    $5$ & $7, 8, 9$ & $2^{2i}+4, 2^{2i}+5, 2^{2i}+6$ & $2^{2i+1}+1, 2^{2i+1}+2, 2^{2i+1}+3$ \\ \hline
    $6$ & $9, 10, 11$ & $2^{2i}+6, 2^{2i}+7, 2^{2i}+8$ & $2^{2i+1}+3, 2^{2i+1}+4, 2^{2i+1}+5$ \\ \hline
    $7$ & $11, 12, 13$ & $2^{2i}+8, 2^{2i}+9, 2^{2i}+10$ & $2^{2i+1}+5, 2^{2i+1}+6, 2^{2i+1}+7$ \\ \hline
    $8$ & $13, 14, 15$ & $2^{2i}+10, 2^{2i}+11, 2^{2i}+12$ & $2^{2i+1}+7, 2^{2i+1}+8, 2^{2i+1}+9$ \\ \hline
    $9$ & $15, 16, 17$ & $2^{2i}+12, 2^{2i}+13, 2^{2i}+14$ & $2^{2i+1}+9, 2^{2i+1}+10, 2^{2i+1}+11$ \\ \hline
    $10$ & $17, 18, 19$ & $2^{2i}+14, 2^{2i}+15, 2^{2i}+16$ & $2^{2i+1}+11, 2^{2i+1}+12, 2^{2i+1}+13$ \\ \hline
\end{tabular}
\end{table}

    But it is straightforward to check the following for $i \ge 3$: 
\begin{align*}
    1 &\not\equiv_t 2^{2i+1}-5, \\
    2^{2i}-2 &\not\equiv_t 2^{2i+1}-5, \\
    5 &\not\equiv_t 2^{2i+1}-1, \\
    2^{2i}+2 &\not\equiv_t 2^{2i+1}-1, \\
    9 &\not\equiv_t 2^{2i+1}+3, \\
    2^{2i}+5 &\not\equiv_t 2^{2i+1}+2, \\
    2^{2i}+8 &\not\equiv_t 2^{2i+1}+5, \\
    13 &\not\equiv_t 2^{2i+1}+7, \\
    2^{2i}+10 &\not\equiv_t 2^{2i+1}+7, \\
    15 &\not\equiv_t 2^{2i+1}+9, \\
    2^{2i}+13 &\not\equiv_t 2^{2i+1}+10.
\end{align*}

    These show that $\mathfrak{K}(n) > 2^{2i}+9.$ However, 
\begin{align*}
    2^{2i}+14 &\equiv_t 2^{2i+1}+11, \\
    2^{2i}+15 &\equiv_t 2^{2i+1}+12, \\
    2^{2i}+16 &\equiv_t 2^{2i+1}+13,
\end{align*}
    which means that indeed $\mathfrak{K}(n) = 2^{2i}+10.$
\end{proof}

\begin{lemma} \label{17over6}
    For all sufficiently large $i$, there exists an odd $n$ such that $\frac{17}{6} \cdot 2^i - 96 < n < \frac{17}{6} \cdot 2^i$ and $\mathfrak{K}(n) \le 2^i+6.$
\end{lemma}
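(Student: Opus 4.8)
The plan is to apply Proposition~\ref{DivideBy2^i} with shift $2^i$ and the single choice $c = 5$. If I can produce an odd $n$ in the prescribed window for which the $6^{\text{th}}$ and $(6+2^i)^{\text{th}}$ blocks of size $n$ coincide, then the prefix of length $(2^i+6)n$ fails to be a $(2^i+6)$-antipower, giving $\mathfrak{K}(n)\le 2^i+6$ at once. By Proposition~\ref{DivideBy2^i} this block equality is equivalent to $x\equiv_t x+n$ for every integer $x$ with $\lfloor 5n/2^i\rfloor\le x\le\lfloor (6n-1)/2^i\rfloor$. Since $n$ sits just below $\frac{17}{6}2^i$, the first task is to show the ratio $n/2^i$ is close enough to $17/6$ from below that, for every $n$ in the window and all large $i$, these two floors equal $14$ and $16$; the index range is then exactly $\{14,15,16\}$, uniformly in the choice of $n$.

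The task thus reduces to arranging $t_{n+14}t_{n+15}t_{n+16}=t_{14}t_{15}t_{16}=101$. Writing $n+14=2m'+1$ with $m'=(n+13)/2$ (an integer, since $n$ is odd) and using the recursions $t_{2m}=t_m$ and $t_{2m+1}=\overline{t_m}$, the three conditions collapse to the single requirement $t_{m'}=t_{m'+1}=0$; that is, the factor $00$ must begin at position $m'$. It is worth noting that the same computation for $c\in\{0,1,2,3,4\}$ forces an impossible pattern — for instance $000$, which cannot occur because ${\bf t}$ has no cubes, or an outright contradiction in the parities — so $c=5$ is the smallest usable shift index, and I would include this short verification to justify the choice.

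To finish, I would establish existence. As $n$ ranges over the odd integers in $\left(\frac{17}{6}2^i-96,\ \frac{17}{6}2^i\right)$, the quantity $m'=(n+13)/2$ ranges over at least $47$ consecutive integers. Because ${\bf t}$ is uniformly recurrent, the factor $00$ occurs in every window of bounded length, and a direct inspection of the gaps between successive occurrences of $00$ bounds that length by a small constant (well below $47$). Hence some $m'$ in the range begins a $00$; the corresponding odd $n$ lies in the window and satisfies $\mathfrak{K}(n)\le 2^i+6$, which is the claim.

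The main obstacle is the bookkeeping that pins the index range down to exactly $\{14,15,16\}$ uniformly over the window and in $i$: this rests on the elementary estimate $\frac{14}{5}<n/2^i<\frac{17}{6}$, valid throughout the window once $i$ is large, from which $\lfloor 5n/2^i\rfloor=14$ and $\lfloor (6n-1)/2^i\rfloor=16$ both follow after checking that subtracting $2^{-i}$ does not drop the upper floor below $16$. Everything downstream of the reduction to ``$00$ begins at $m'$'' is soft (uniform recurrence plus a finite gap check), so the real crux is the choice $c=5$ together with the stability of its associated index range.
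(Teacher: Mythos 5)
Your proposal is correct, and it opens exactly as the paper does: you compare the $6^{\text{th}}$ and $(6+2^i)^{\text{th}}$ blocks via Proposition~\ref{DivideBy2^i}, pin the index range down to $\{14,15,16\}$ using $\frac{14}{5}<n/2^i<\frac{17}{6}$, and reduce the lemma to finding an odd $n$ in the window with $t_{14+n}t_{15+n}t_{16+n}=t_{14}t_{15}t_{16}=101$. Where you diverge is the existence step. The paper restricts to $n\equiv 1\bmod 32$, so that $14+n,15+n,16+n$ reduce to $15,16,17$ modulo $32$ and the pattern is automatically $101$ or $010$; it then takes the three such $n$ in the window and rules out the bad case by noting that three consecutive integers cannot all be Thue--Morse equivalent (no $000$ or $111$ in ${\bf t}$). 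You instead use the substitution recursions $t_{2m}=t_m$, $t_{2m+1}=\overline{t_m}$ to collapse all three conditions into the single clean requirement that $00$ occur at position $m'=(n+13)/2$, and then let $m'$ sweep roughly $48$ consecutive values. This is arguably tidier, and it exploits all odd $n$ in the window rather than just three. The one thing you should actually write out is the bound on gaps between occurrences of $00$: ``uniform recurrence plus a finite gap check'' is asserted, not proved. It is easy to supply --- writing ${\bf t}$ as a product of the blocks $0110$ and $1001$ indexed by ${\bf t}$ itself, cube-freeness forces a $1001$ block (which contains $00$) among any three consecutive blocks, so the gap is at most about $16$, comfortably below $47$ --- but without some such argument the final step is incomplete. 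The aside ruling out $c\in\{0,\dots,4\}$ is unnecessary for the stated upper bound and can be dropped.
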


\begin{proof}
    Note that if $i$ is sufficiently large, then $\frac{17}{6} \cdot 2^i-96 > \frac{14}{5} \cdot 2^i$. This means that if $c = 6$, then $\left\lfloor \frac{(c-1)n}{2^i}\right\rfloor, \dots, \left\lfloor \frac{cn-1}{2^i} \right\rfloor$ is the sequence $14, 15, 16$. Note that $t_{15} = t_{17} \neq t_{16}$ and if we choose $n \equiv 1 \Mod {32},$ then $14+n \equiv 15 \Mod {32}$ so $t_{14+n} = t_{16+n} \neq t_{15+n}$. Thus, it suffices to find an $n$ such that $t_{14} = t_{14+n}$ and $n \equiv 1 \Mod {32}$. There exist $3$ values of $n$, say $n_1,n_2,n_3$, between $\frac{17}{6} \cdot 2^i - 96$ and $\frac{17}{6} \cdot 2^i$ satisfying the latter condition, such that $n_3 = n_2+32 = n_1+64.$ It suffices to show $14+n_1 \equiv_t 14+n_2 \equiv_t 14+n_3$ cannot hold, because then $t_{14} = t_{14+n_I}$ for some $n_i.$ If $14+n_1 \equiv_t 14+n_2 \equiv_t 14+n_3$, then $\left\lfloor \frac{14+n_1}{32}\right\rfloor \equiv_t \left\lfloor \frac{14+n_2}{32}\right\rfloor \equiv_t \left\lfloor \frac{14+n_3}{32}\right\rfloor$ since $n_1\equiv n_2\equiv n_3 \Mod {32}$. This means that $3$ consecutive integers are Thue-Morse equivalent, a contradiction. Thus, the $6^\text{th}$ block and $(2^i+6)^\text{th}$ block of length $n$ are the same for some $n \in \{n_1, n_2, n_3\}$.
\end{proof}


    

\section{Bounds on $\gamma(k)$ and on $\Gamma(k)-\gamma(k)$} \label{BoundsGamma}

First, we directly use the results of the previous section to improve bounds on $\gamma(k)$.

\begin{theorem} \label{limsupgamma}
    We have $\limsup\limits_{k \to \infty} \frac{\gamma(k)}{k} = \frac{3}{2}$.
\end{theorem}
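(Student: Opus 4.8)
The plan is to pair the upper bound $\limsup_{k\to\infty}\gamma(k)/k \le 3/2$ of Defant \cite{Defant} with a matching lower bound extracted from Theorem \ref{limsupK}. The bridge between $\gamma$ and $\mathfrak{K}$ is the observation that for odd $n$ the prefix of length $kn$ is a $k$-antipower precisely when the first $k$ blocks of size $n$ are pairwise distinct, i.e. exactly when $k < \mathfrak{K}(n)$ (the set of $k$ for which this holds is the down-set $\{1,\dots,\mathfrak{K}(n)-1\}$ by monotonicity). Hence $\gamma(k) = \min\{\,n\text{ odd} : \mathfrak{K}(n) > k\,\}$, and in particular, if every odd $n \le N$ satisfies $\mathfrak{K}(n)\le k$, then $\gamma(k) > N$.

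The lower bound will come from choosing $N \approx 3\cdot 2^{j}$ and $k=\max\{\mathfrak{K}(n): n\le N\text{ odd}\}$. The key estimate is that
$$\max_{\substack{n\le 3\cdot 2^{j}-14\\ n\text{ odd}}}\mathfrak{K}(n)\ \le\ 2^{j+1}\,(1+o(1)).$$
To establish this I would revisit the per-$n$ bounds proved inside Theorem \ref{limsupK}: an odd $n$ in the good range $(2^{i},\,3\cdot 2^{i-1}-13)$ satisfies $\mathfrak{K}(n)\le 2^{i}(1+o(1))$, while one in the bad range $[3\cdot 2^{i-1}-13,\,2^{i+1})$ satisfies $\mathfrak{K}(n)\le 2^{i+1}(1+o(1))$. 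The cutoff $3\cdot 2^{j}-14$ is chosen precisely so that every odd $n\le 3\cdot 2^{j}-14$ with $\lfloor\log_2 n\rfloor=j+1$ still lies in the good range of level $j+1$ (which ends at $3\cdot 2^{j}-13$) and is thus bounded by $2^{j+1}(1+o(1))$; every smaller $n$ lies in a good or bad range of level at most $j$ and is likewise bounded by $2^{j+1}(1+o(1))$. The one place the exact cutoff matters is that no such $n$ reaches the bad range of level $j+1$, which would only give the weaker bound $2^{j+2}(1+o(1))$.

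With the estimate in hand, set $N=3\cdot 2^{j}-14$ and $k_j=\max\{\mathfrak{K}(n): n\le N\text{ odd}\}\le 2^{j+1}(1+o(1))$. Then $\gamma(k_j)>N$, so
$$\frac{\gamma(k_j)}{k_j}\ >\ \frac{3\cdot 2^{j}-14}{2^{j+1}(1+o(1))},$$
which tends to $3/2$ as $j\to\infty$. Since $k_j\ge \mathfrak{K}(n)$ for $n$ near $3\cdot 2^{j}$ and $\mathfrak{K}(n)\to\infty$ (for instance by the bound $\mathfrak{K}(n)\ge 1+2^{1+\lfloor\log_2(n/3)\rfloor}$ implied by Proposition \ref{ConvOfPrev}), the sequence $k_j$ tends to infinity, and therefore $\limsup_{k\to\infty}\gamma(k)/k\ge 3/2$. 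Combined with Defant's upper bound, this yields the claimed equality.

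I expect the main obstacle to be the uniform range estimate on $\max_{n\le N}\mathfrak{K}(n)$: one must check that the per-$n$ bounds from the proof of Theorem \ref{limsupK} hold with an error term that is $o(2^{j+1})$ uniformly over all odd $n\le N$ (this is what forces one to use all of the lemmas of Section 3, and in particular Lemma \ref{CloseToHighV2} to handle those $n$ whose largest relevant power of two is close to $2^{i}$), together with pinning down the cutoff so that the level-$(j+1)$ bad range is excluded. The remaining steps are routine bookkeeping.
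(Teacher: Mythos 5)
Your proposal is correct and follows essentially the same route as the paper: both arguments reduce the lower bound to showing that $\mathfrak{K}(n) \le 2^{i}(1+o(1))$ holds uniformly over all odd $n$ up to roughly $3\cdot 2^{i-1}-13$ (via Lemmas \ref{8x+1}--\ref{2^i+d}, with Lemma \ref{CloseToHighV2} covering the case of large $j$), and then take $k$ just above this uniform bound so that $\gamma(k) \ge 3\cdot 2^{i-1} - O(1)$. The only cosmetic difference is that the paper fixes the explicit value $k = 2^i + 3\cdot 2^{(3i+11)/4} + 28$ and disposes of $n < 2^i$ by citing Lemma 17 of \cite{Defant}, whereas you take $k$ to be a maximum of $\mathfrak{K}$ over the range and reuse the level-by-level good/bad-range bounds from the proof of Theorem \ref{limsupK}.
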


\begin{proof}
    Since $\limsup_{k \to \infty} \frac{\gamma(k)}{k} \le \frac{3}{2}$ is true by Theorem \ref{DefantTheorem}, proven in \cite{Defant}, we only need to prove that $\limsup_{k \to \infty} \frac{\gamma(k)}{k} \ge \frac{3}{2}$. Suppose $n=a\cdot 2^j+d$ is between $2^i$ and $3 \cdot 2^{i-1}-13$. If $j \ge \frac{3i+11}{4},$ then it follows from Lemmas \ref{CloseToHighV2} and \ref{2^i+d} that $\mathfrak{K}(n) \le 2^i+7$. Otherwise, $\mathfrak K(n)\leq 2^i + 3 \cdot 2^j +28 \le 2^i + 3 \cdot 2^{(3i+11)/4} + 28$ by Lemmas \ref{8x+1} and \ref{32x+d}. For $n < 2^i,$ we have that $\mathfrak{K}(n) < 2^i(1+o(1))$ \cite[Lemma 17]{Defant}, which means that we can choose $k = 2^i(1+o(1))$ such that $\gamma(k) \ge \frac{3}{2} \cdot 2^i - 13.$ Taking this limit as $i$ goes to infinity, we get the desired result.
\end{proof}

\begin{theorem} \label{liminfgamma}
    We have $\liminf\limits_{k \to \infty} \frac{\gamma(k)}{k} \ge \frac{3}{4}.$
\end{theorem}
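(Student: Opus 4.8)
The plan is to deduce this bound directly from Theorem~\ref{limsupK} together with the elementary duality between $\mathfrak{K}$ and $\gamma$. The key structural observation is that the $k$-antipower property of prefixes of $\textbf{t}$ is \emph{downward closed} in $k$: if the length-$kn$ prefix of $\textbf{t}$ is a $k$-antipower, then its first $k'$ blocks of size $n$ form a sub-collection of $k$ pairwise-distinct blocks and hence are themselves pairwise distinct, so the length-$k'n$ prefix is a $k'$-antipower for every $1 \le k' \le k$. Thus the set of $k$ for which the length-$kn$ prefix is a $k$-antipower is a down-set, which by the definition of $\mathfrak{K}$ equals $\{1,\dots,\mathfrak{K}(n)-1\}$. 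Equivalently, for odd $n$ we have $n \in \mathcal{F}(k)$ precisely when $k < \mathfrak{K}(n)$, so
\[
\gamma(k) = \min\{\, n \in 2\BN-1 : \mathfrak{K}(n) > k \,\}.
\]

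Next I would record that $\gamma(k) \to \infty$ as $k \to \infty$. Indeed, for any fixed odd $n_0$ the quantity $\mathfrak{K}(n_0)$ is finite, so once $k \ge \mathfrak{K}(n_0)$ we have $n_0 \notin \mathcal{F}(k)$ by the characterization above; hence no fixed odd value can equal $\gamma(k)$ for arbitrarily large $k$, forcing $\gamma(k) \to \infty$.

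Now fix $\epsilon > 0$. By Theorem~\ref{limsupK} there is an $N$ such that $\mathfrak{K}(n) \le \left(\frac{4}{3} + \epsilon\right) n$ for every odd $n \ge N$. Choose $k$ large enough that $\gamma(k) \ge N$ (possible by the previous paragraph), and set $n = \gamma(k)$. Since $n \in \mathcal{F}(k)$, the characterization gives $k < \mathfrak{K}(n)$, while $n \ge N$ gives $\mathfrak{K}(n) \le \left(\frac{4}{3}+\epsilon\right) n$. Combining,
\[
k < \left(\tfrac{4}{3}+\epsilon\right)\gamma(k), \qquad\text{so}\qquad \frac{\gamma(k)}{k} > \frac{1}{\frac{4}{3}+\epsilon} = \frac{3}{4+3\epsilon}.
\]
Letting $k \to \infty$ and then $\epsilon \to 0$ yields $\liminf_{k\to\infty} \gamma(k)/k \ge \frac{3}{4}$.

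The essential work has already been carried out in Theorem~\ref{limsupK}; the only remaining content is the translation above, whose one subtle point—which I would state carefully—is the downward-closedness of the antipower property, since this is exactly what legitimizes replacing the condition ``$n \in \mathcal{F}(k)$'' by the clean inequality ``$\mathfrak{K}(n) > k$.'' No genuinely hard estimate remains, and I expect this to be the shortest argument in the section.
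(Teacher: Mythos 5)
Your proof is correct and follows essentially the same route as the paper: invert Theorem~\ref{limsupK} via the duality between $\mathfrak{K}$ and $\gamma$, after noting $\gamma(k)\to\infty$ so that the asymptotic bound on $\mathfrak{K}$ applies at $n=\gamma(k)$. Your version is in fact slightly more careful than the paper's, which asserts without comment that $\gamma(k)$ is the least odd $n$ with $\mathfrak{K}(n)\ge k$ (the correct inequality is $\mathfrak{K}(n)>k$, as you derive from downward-closedness), but this off-by-one is harmless for the limit.
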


\begin{proof}
    This follows from Theorem \ref{limsupK}, since $\gamma(k)$ is clearly the smallest $n$ such that $\mathfrak{K}(n) \ge k.$ Note that for all $\varepsilon$ we can choose $N$ such that $\gamma(n) \le (\frac{4}{3} + \varepsilon)n$ for all $n \ge N.$ Now, let $K = \max_{n \le N} \mathfrak{K}(n).$ Then for all $k \ge K,$ $\gamma(k) \ge N.$ If $\gamma(k) = n,$ then $\mathfrak{K}(n) \ge k$. But $\mathfrak{K}(n) \le (\frac{4}{3} + \varepsilon)n$, so $(\frac{4}{3} + \varepsilon)n \ge k.$ Therefore, $\gamma(k) \ge k \cdot \frac{1}{4/3 + \varepsilon}$ for all sufficiently large $k$, so the conclusion follows.
\end{proof}

Thus, we have proven the second part of Defant's conjecture and improved the lower bound for $\liminf\limits_{k \to \infty} \frac{\gamma(k)}{k}$ to $\frac{3}{4}.$

It turns out that $\Gamma(k)-\gamma(k)$ also grows linearly in $k.$ Note that $\limsup_{k \to \infty} \frac{\Gamma(k)-\gamma(k)}{k}$ is clearly positive and bounded above by $\frac{9}{4}$, since $$\frac{9}{4} \ge \limsup\limits_{k \to \infty} \frac{\Gamma(k)}{k}-\liminf\limits_{k \to \infty} \frac{\gamma(k)}{k} \ge \limsup\limits_{k \to \infty} \frac{\Gamma(k)-\gamma(k)}{k},$$ since $\limsup_{k \to \infty} \frac{\Gamma(k)}{k} = 3$ by Theorem \ref{DefantTheorem} (d) and $\liminf_{k \to \infty} \frac{\gamma(k)}{k} \ge \frac{3}{4}$ by Theorem \ref{liminfgamma}. However, if we look at $\liminf_{k \to \infty} \frac{\Gamma(k)-\gamma(k)}{k}$ similarly using Theorems \ref{DefantTheorem} (c) and \ref{limsupgamma}, we get $$\liminf\limits_{k \to \infty} \frac{\Gamma(k)-\gamma(k)}{k} \ge \liminf\limits_{k \to \infty} \frac{\Gamma(k)}{k} - \limsup\limits_{k \to \infty} \frac{\gamma(k)}{k} = \frac{3}{2} - \frac{3}{2} = 0,$$ so it is not obvious from our bounds on $\Gamma(k)$ and $\gamma(k)$ that $\Gamma(k)-\gamma(k)$ grows linearly in $k$. However, we can use the lemmas of the previous section and results from \cite{Defant} to find stronger bounds on $\Gamma(k)-\gamma(k)$.

\begin{theorem}
    We have $$\frac{1}{2} \le \liminf\limits_{k \to \infty} \frac{\Gamma(k)-\gamma(k)}{k} \le \frac{3}{4}.$$
\end{theorem}

\begin{proof}
    Suppose that $2^{i} \le k < 2^{i+1}$ and $i$ is sufficiently large. If $k \le 2^i+5$, then $\mathfrak{K}(2^i+3) \ge k$, so $\gamma(k) \le 2^i+3.$ However, if $i$ is even, then \cite[Lemma 10]{Defant} proves that $\Gamma(2^{i-1} + 3 \cdot 2^{i/2 + 2} + 49) \ge 3 \cdot 2^{i-1}-2^{i/2-2}+1$. But both $\gamma$ and $\Gamma$ are clearly nondecreasing functions, so $\Gamma(k) \ge 3 \cdot 2^{i-1}-2^{i/2-2}+1$ for sufficiently large $i$. Therefore, $\Gamma(k)-\gamma(k) \ge 2^{i-1}-2^{i/2-2}-2 \ge (1/2-o(1))\cdot k.$ Similarly, if $i$ is odd, \cite[Lemma 10]{Defant} proves that $\Gamma(2^{i-1}+2^{(i-1)/2}+2) \ge 3\cdot 2^{i-1}-2^{(i-1)/2}+1$. Thus, $\Gamma(k) \ge 3\cdot 2^{i-1}-2^{(i-1)/2}+1$, and $\Gamma(k)-\gamma(k) \ge 2^{i-1}-2^{(i-1)/2}-2 \ge (1/2-o(1)) \cdot k.$
    
    Alternatively, if $k > 2^i+5,$ then \cite[Lemma 10]{Defant} clearly implies that there exist sequences $a_n, b_n$ converging to $0$ such that $\mathfrak{K}((3-a_n)\cdot 2^i) < 2^i \cdot (1+b_n).$ Also, $$\mathfrak{K}(3\cdot 2^{i-1}+1) > \frac{5\cdot 2^{2i-1}}{3\cdot 2^{i-1}+1} \ge 2^i \cdot (1+b_n)$$ for sufficiently large $i,$ with the first inequality true by \cite[Lemma 19]{Defant}. Therefore, $\gamma(k) \le 3\cdot 2^{i-1}+1$ if $k \le 2^i(1+b_n).$ However, $\mathfrak{K}(2^{i+1}+1) = 2^i+2 < k$, which means that $\Gamma(k) \ge 2^{i+1}+1$. Therefore, $\Gamma(k)-\gamma(k) \ge 2^{i-1} \ge (1/2-o(1)) \cdot k.$
    
    Finally, if $k>2^i(1+b_n),$ then $\Gamma(k) \ge (3-a_n) 2^i$. As $\mathfrak{K}(2^{i+1}+3) > 2^{i+1} > k,$ we have that $\gamma(k) \le 2^{i+1}+3$, which means $\Gamma(k)-\gamma(k) \ge (1-a_n) \cdot 2^i - 3 \ge (1/2-o(1)) k$ since $k < 2^{i+1}.$ This proves $\liminf_{k \to \infty} \frac{\Gamma(k)-\gamma(k)}{k} > \frac{1}{2}$ and proves that $\Gamma-\gamma$ indeed grows linearly in $k$, as we know $\Gamma(k)-\gamma(k) \le \Gamma(k)$ and thus $\limsup_{k \to \infty} \frac{\Gamma(k)-\gamma(k)}{k} < \infty.$
    
    The upper bound is quite direct, since $$\liminf\limits_{k \to \infty} \frac{\Gamma(k)-\gamma(k)}{k} \le \liminf\limits_{k \to \infty} \frac{\Gamma(k)}{k}-\liminf\limits_{k \to \infty} \frac{\gamma(k)}{k} \le \frac{3}{4}. \qedhere$$
\end{proof}

Unfortunately, we were unable to establish stronger bounds on this value. It appears unlikely that the true value of $\liminf_{k \to \infty} \frac{\Gamma(k)-\gamma(k)}{k}$ is $\frac{3}{4}$.

\begin{theorem}
    We have $$\frac{11}{6} \le \limsup_{k \to \infty} \frac{\Gamma(k)-\gamma(k)}{k} \le \frac{9}{4}.$$
\end{theorem}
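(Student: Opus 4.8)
The upper bound was already noted above, and requires no new work: Defant's result $\limsup\limits_{k\to\infty}\Gamma(k)/k=3$ together with Theorem \ref{liminfgamma} gives
$$\limsup\limits_{k\to\infty}\frac{\Gamma(k)-\gamma(k)}{k}\;\le\;\limsup\limits_{k\to\infty}\frac{\Gamma(k)}{k}-\liminf\limits_{k\to\infty}\frac{\gamma(k)}{k}\;\le\;3-\frac34=\frac94.$$
So the plan is to concentrate entirely on the lower bound $\limsup\limits_{k\to\infty}\frac{\Gamma(k)-\gamma(k)}{k}\ge\frac{11}{6}$.

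For the lower bound I would exhibit, for infinitely many $k$, a small odd $n_1$ and a large odd $n_2$ whose $\mathfrak{K}$-values straddle $k$, namely $\mathfrak{K}(n_2)\le k<\mathfrak{K}(n_1)$. Since $n\in\mathcal{F}(k)$ holds exactly when $\mathfrak{K}(n)>k$, such a pair immediately forces $\gamma(k)\le n_1$ and $\Gamma(k)\ge n_2$, and hence $\Gamma(k)-\gamma(k)\ge n_2-n_1$. The two anchors are supplied by Lemmas \ref{2^{2i}-3} and \ref{17over6}, which were proved precisely to produce one unusually large and one unusually small value of $\mathfrak{K}$ in the right ranges.

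Concretely, I would fix a large even exponent, write it as $2i$, and take $n_1=2^{2i}-3$, so that Lemma \ref{2^{2i}-3} gives $\mathfrak{K}(n_1)=2^{2i}+10$. Invoking Lemma \ref{17over6} with exponent $2i$ produces an odd $n_2$ (the construction chooses $n_2\equiv 1\bmod 32$) satisfying $\tfrac{17}{6}2^{2i}-96<n_2<\tfrac{17}{6}2^{2i}$ and $\mathfrak{K}(n_2)\le 2^{2i}+6$. Setting $k=2^{2i}+6$ then arranges the straddle: $\mathfrak{K}(n_2)\le k$ gives $\Gamma(k)\ge n_2$, while $\mathfrak{K}(n_1)=2^{2i}+10>k$ gives $\gamma(k)\le n_1$. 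Therefore
$$\Gamma(k)-\gamma(k)\;\ge\;n_2-n_1\;>\;\Big(\tfrac{17}{6}2^{2i}-96\Big)-\big(2^{2i}-3\big)\;=\;\tfrac{11}{6}2^{2i}-93,$$
and dividing by $k=2^{2i}+6$ and letting $i\to\infty$ yields $\limsup\limits_{k\to\infty}\frac{\Gamma(k)-\gamma(k)}{k}\ge\frac{11}{6}$.

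The mathematical content is entirely carried by the two lemmas, so the only care needed here is bookkeeping, and I do not expect a genuine obstacle. The points to watch are: both lemmas must be applied at the same exponent $2i$, and Lemma \ref{2^{2i}-3} is stated only for even exponents, which is why I restrict to this subsequence of $k$ (harmless for a $\limsup$); one must check $n_2>n_1$ for large $i$, which is immediate since $n_2>\tfrac{17}{6}2^{2i}-96>2^{2i}>n_1$; and one must respect the strict inequality in $n\in\mathcal{F}(k)\iff\mathfrak{K}(n)>k$ when passing from the $\mathfrak{K}$-bounds to bounds on $\gamma$ and $\Gamma$, which is exactly why $k=2^{2i}+6$ (strictly below $\mathfrak{K}(n_1)=2^{2i}+10$ and at least $\mathfrak{K}(n_2)$) is the correct choice. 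The residual gap between $\frac{11}{6}$ and $\frac{9}{4}$ reflects the absence of a construction giving an $n_1$ much smaller than $2^{2i}$ while keeping $\mathfrak{K}(n_1)>2^{2i}$; producing such an $n_1$ would be the route to pushing the lower bound higher.
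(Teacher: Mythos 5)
Your proof is correct and follows essentially the same route as the paper: the upper bound via $\limsup\Gamma(k)/k-\liminf\gamma(k)/k$, and the lower bound by straddling $k$ between $\mathfrak{K}(2^{2i}-3)=2^{2i}+10$ (Lemma \ref{2^{2i}-3}) and the $n$ near $\frac{17}{6}\cdot 2^{2i}$ with $\mathfrak{K}(n)\le 2^{2i}+6$ (Lemma \ref{17over6}). The only difference is the choice $k=2^{2i}+6$ versus the paper's $k=2^{2i}+9$, which is immaterial since both satisfy $2^{2i}+6\le k<2^{2i}+10$.
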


\begin{proof}
    The second inequality has been proven already. The first inequality comes from setting $k = 2^{2i}+9.$ Note that $\gamma(2^{2i}+9) \le 2^{2i}-3$ for sufficiently large $i$ by Lemma \ref{2^{2i}-3}, but $\Gamma(2^{2i}+9) \ge \frac{17}{6} \cdot 2^{2i}-96$ for sufficiently large $i$ by Lemma \ref{17over6}. Therefore, $$\frac{\Gamma(k)-\gamma(k)}{k} \ge \frac{\frac{11}{6} \cdot 2^{2i}-93}{2^{2i}+9} \ge \frac{11}{6}(1-o(1)),$$ so the conclusion follows by letting $i$ go to infinity.
\end{proof}

\section{Conclusion and Further Directions} \label{Conclusion}

A natural way to view these results is to consider an infinite grid $2\BN-1 \times \BN$, where $(x, y)$ is shaded if $y < \mathfrak{K}(x).$ In the interval $[3\cdot 2^{i-1}-13, 3\cdot 2^i-13) \times \BN,$ we know that every element $(x, y)$ where $x$ is odd and $y \le 2^i$ is shaded, and every element $(x, y)$ where $x$ is odd and $y \ge 2^{i+1} \cdot (1+o(1))$ is unshaded. 

Unfortunately we know very little about the in-between region, with the exception of some values of $\mathfrak{K}(n)$ for a few special values of $n$. This makes improving bounds for $\liminf_{k \to \infty} \frac{\gamma(n)}{n}$ and $\liminf_{k \to \infty} \frac{\gamma(n)}{n} \frac{\Gamma(n)-\gamma(n)}{n}$ difficult. While we know $\mathfrak{K}(n)$ is between $2^i$ and $2^{i+1}(1+o(1))$ for $3 \cdot 2^{i-1} < n < 3\cdot 2^i,$ and while naturally it appears that $\mathfrak{K}(n)$ should usually be close to $2^i,$ there are certain values of $n$ such as $3 \cdot 2^{i-1}+1$ and $2^{i+1}+3$ where $\mathfrak{K}(n)$ can get very large. Improving the asymptotic bounds on $\frac{\gamma(k)}{k}$ and $\frac{\Gamma(k)-\gamma(k)}{k}$ rely on understanding for what $n$ is $\mathfrak{K}(n)$ much larger than expected to be.

We make the following conjecture about the growth of $\mathfrak{K}(n)$, which if true would prove useful in understanding $\Gamma(k)-\gamma(k)$ as well:

\begin{conjecture} \label{MyConjecture}
    There exists some sequence $a_i$ converging to $0$ such that $\mathfrak{K}(n) \le \mathfrak{K}(3 \cdot 2^i+1)$ for all odd $n$ with $3 \cdot 2^i < n < 2^{i+2} \cdot (1-a_i).$
\end{conjecture}

\begin{figure}
    \centering
    \includegraphics[width=4.8in]{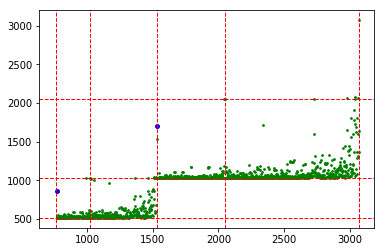}
    \caption{A graph of $\mathfrak{K}(n)$ for odd $n$ between $768 = 3 \cdot 2^8$ and $3072 = 3 \cdot 2^{10}.$ The points for $n = 3 \cdot 2^8+1$ and $n = 3 \cdot 2^9+1$ are in blue. Dashed vertical lines are drawn at $x = 768, 1024, 1536, 2048, 3072,$ and dashed horizontal lines are drawn at $y = 512, 1024, 2048.$ Note that $\mathfrak{K}(3 \cdot 2^i + 1)$ appears to be larger than $\mathfrak{K}(n)$ for all $n \le 2^{i+2}(1-\varepsilon)$ for some small $\varepsilon.$}
    \label{Visualization}
\end{figure}

Conjecture \ref{MyConjecture} is supported by Figure \ref{Visualization}. Note that Conjecture \ref{MyConjecture} implies that $\limsup_{k \to \infty} \frac{\gamma(n)}{n} = \frac{9}{10}$ and $\liminf_{k \to \infty} \frac{\Gamma(k)-\gamma(k)}{k} = \frac{1}{2}.$ Moreover, it implies that $\limsup_{k \to \infty} \frac{\Gamma(k)-\gamma(k)}{k} \le 2$ since for all $2^i \le k < 2^{i+1},$ $\Gamma(k) < 3 \cdot 2^i$ and $\gamma(k) > 2^i \cdot (1-o(1)).$

Finally, define the infinite word $\mathbf{t}^n = t_0^nt_1^nt_2^n \cdots$ so that $t_i^n$ is the sum of the digits of $i$ in base $n$, reduced mod $n$. Similar questions to the ones we studied could be asked about corresponding values of $\mathfrak{K}, \gamma, \Gamma$ for $\mathbf{t}^n.$ 

\vspace{1cm}
\section*{Acknowledgments}
This research was funded by NSF grant 1358659 and NSA grant H98230-16-1-0026
as part of the 2016 Duluth Research Experience for Undergraduates (REU).

The author would like to thank Prof.\ Joe Gallian for running a wonderful REU and supervising the research, as well as Colin Defant, a fellow student at the REU whose research he encouraged me to look at. The author would also like to thank Gallian and Defant as well as Levent Alpoge and the anonymous reviewers for helpful comments on editing the paper.

\vspace{1cm}

\vspace{1cm}

\appendix

\section{Fix to Error in Lemma 14 in Defant's paper} \label{FixColin}

In \cite[Lemma 14]{Defant}, Defant attempts to prove $\mathfrak{K}(n) < \left(1 + \frac{37}{n}\right) \cdot 2^{\lceil \log_2 n\rceil}$ for all $n \equiv 29 \Mod {32},$ though his proof has a flaw. In his proof, he claims that
$$\bigcup\limits_{r = 9}^{17} \left(\frac{17}{2r}, \frac{10}{r+1}\right) = \left(\frac{1}{2}, 1\right),$$
which is incorrect. Here, we fix this issue by proving something slightly weaker which is sufficient for our purposes. We prove that for all sufficiently large $n$ such that $n \equiv 29 \Mod {32},$ $\mathfrak{K}(n) < \left(1 + o(1)\right) \cdot 2^{\lceil \log_2 n\rceil}$. One can verify this is sufficient to fix any issues with \cite[Lemma 14]{Defant} as well as any future issues in Defant's to get the required bounds on $\gamma(n)$ and $\Gamma(n)-\gamma(n)$.

The following lemma corresponds closely to Lemma \ref{32x+d}. In addition to Lemma \ref{CloseToHighV2} for $n \equiv 29 \Mod {32}$ where $2^{i-1} < n < 2^i,$ it will take care of almost all cases where $n \equiv 29 \Mod {32}$ and $2^{i-1} < n < 2^i.$

\begin{lemma} \label{Fix1}
    Suppose that $i$ is sufficiently large and $2^{i-1} \le n \le 2^{i}.$ Also, suppose that $n = a \cdot 2^j - 3$ where $a$ is odd and $j \ge 5$. Then $\mathfrak{K}(n) \le 2^i + 4 \cdot 2^j + 39$.
\end{lemma}

\begin{proof}
    Recall that $\varphi(-3) = 18,$ where $\varphi$ was the function from Lemma \ref{32x+d}. Now, for some nonnegative integer $y$, let $x = 32y + 18.$ Since $n \le 2^i,$ there must exist some integer $c$ such that $\left\lfloor \frac{cn}{2^i} \right\rfloor = x.$ We also must have $\left\lfloor \frac{(c+1)n-1}{2^i} \right\rfloor \le x+1.$
    
    Now, suppose that we chose $y$ such that $y \equiv_t y + a \cdot 2^{j-5}.$ Then $x \equiv_t x+n$ and $x+1 \equiv_t x+n+1$ since $x = 32y+18$ and $x+n = 32(y+a \cdot 2^{j-5}) + 15.$ Therefore, the $(c+1)$th and $(c+1+2^i)$th blocks of size $n$ are equal, which means that since $c \le 2(x+1)$ as $n > 2^{i-1}$ and $\lfloor \frac{cn}{2^i} \rfloor = x,$ $\mathfrak{K}(n) \le 2^i + 2x+3 = 2^i + 64y + 39.$
    
    Now, the only goal is to find a sufficiently small $y$. However, since $a$ is odd then we must have either $1 \equiv_t a+1$ or $2 \equiv_t a+2.$ This is because $1 \equiv_t 2$ but $a+1$ is even so $a+1 \not\equiv_t a+2$. If $s \in \{0, 1, 2\}$ is the smallest nonnegative integer such that $s \equiv_t a+s,$ then $s \cdot 2^{j-5} = y$ satisfies $y \equiv_t y + a \cdot 2^{j-5}.$ Therefore, $\mathfrak{K}(n) \le 2^i + 64 \cdot (2 \cdot 2^{j-5}) + 39 = 2^i + 4 \cdot 2^j + 39.$
\end{proof}

Now, the only cases remaining are $n = 2^i-3$ and $n = 3 \cdot 2^{i-2}-3.$

\begin{lemma} \label{Fix2}
    $\mathfrak{K}(2^i-3) \le 2^i+10$ for all sufficiently large $i$.
\end{lemma}

\begin{proof}
    Note that $2^i+14 \equiv_t 2^{i+1}+11, 2^{i}+15 \equiv_t 2^{i+1} + 12,$ and $2^i+16 \equiv_t 2^{i+1}+13.$ This means that $x \equiv_t x+(2^i-3)$ for all $2^i+14 \le x \le 2^i+16,$ i.e., for all $\lfloor \frac{(2^{i-1}+9)(2^i-3)}{2^{i-1}} \rfloor \le x \le \left\lfloor \frac{(2^{i-1}+10)(2^i-3)-1}{2^{i-1}} \right\rfloor$. Therefore, setting $c = 2^i+9$ and using Proposition \ref{DivideBy2^i} gives us the desired result.
\end{proof}

\begin{lemma} \label{Fix3}
    $\mathfrak{K}(3 \cdot 2^{i-1} - 3) \le 2^i + 14$ for all sufficiently large $i$.
\end{lemma}

\begin{proof}
    Note that $19 \equiv_t 3 \cdot 2^{i-1}+16$ and $20 \equiv_t 3 \cdot 2^{i-1} + 17.$ Therefore, we have that $x \equiv_t x+(3 \cdot 2^{i-1}-3)$ for all $19 \le x \le 20,$ so $x \equiv_t x+(3 \cdot 2^{i-1}-3)$ for all $\left\lfloor \frac{13 \cdot (3 \cdot 2^{i-1} - 3)}{2^i} \right\rfloor \le x \le \left\lfloor \frac{14 \cdot (3 \cdot 2^{i-1}-3)-1}{2^i} \right\rfloor.$ Now, using Proposition \ref{DivideBy2^i} with $c = 13$ gives us the desired result.
\end{proof}

Lemmas \ref{CloseToHighV2}, \ref{Fix1}, \ref{Fix2}, and \ref{Fix3}, in addition to the work of \cite{Defant}, give us that for all odd $2^{i-1} \le n \le 2^i,$ $\mathfrak{K}(n) \le 2^i \cdot (1+o(1))$ for sufficiently large $i$. This fixes all issues in Defant's paper.

\end{document}